\numberwithin{equation}{section}
\newtheorem{theo}{Theorem}
\newtheorem{coro}{Corollary}
\newtheorem{prop}{Proposition}
\newtheorem{lem}{Lemma}
\theoremstyle{remark}
\def\Punkt(#1 #2){\move(#1 #2)\fcir f:0 r:.10}
\def\DickPunkt(#1 #2){\move(#1 #2)\fcir f:0 r:.15}
\def\og{\leavevmode\raise0.25ex
                       \hbox{$\scriptscriptstyle\ll$}}
\def\fg{\leavevmode\raise0.25ex
                       \hbox{$\scriptscriptstyle\gg$}}
\leavevmode\hbox{\fontencoding{U}\fontfamily{lasy}%
                    \fontseries{m}\fontshape{n}\selectfont
    (\kern-0.20em(\kern+0.20em}\nobreak}
\def\al{\alpha}
\def\si{\sigma}
\def\ta{\tau}
\def\ph{\varphi}
\def\({\left(}
\def\){\right)}
\def\[{\left[}
\def\]{\right]}
\def\fl#1{\left\lfloor#1\right\rfloor}
\def\cl#1{\left\lceil#1\right\rceil
}
\def\lcm{\operatorname{lcm}}
\def\Qbar{\overline{\mathbb Q}}
\begin{document}

\title[]{Arithmetic properties of the Taylor coefficients of differentially algebraic power
series}

\author[]
       {C. Krattenthaler$^\dagger$ and T. Rivoal}
\date{\today}

\address{C. Krattenthaler, Fakult\"at f\"ur Mathematik, Universit\"at Wien,
Oskar-Morgenstern-Platz~1, A-1090 Vienna, Austria.
WWW: \tt http://www.mat.univie.ac.at/\~{}kratt.}

\address{T. Rivoal,
Institut Fourier, Universit{\'e} Grenoble Alpes, CNRS,
CS 40700, 38058 Grenoble cedex 9,
France.
WWW: \tt https://rivoal.perso.math.cnrs.fr.}

\thanks{$^\dagger$Research partially supported by the Austrian
Science Foundation FWF, grant 10.55776/F1002, in the framework
of the Special Research Program ``Discrete Random Structures:
Enumeration and Scaling Limits". 
}

\subjclass[2020]{Primary 11B37;
Secondary 05A10 11B65 30B10}

\keywords{Non-linear recurrences, differentially algebraic power
series, Gregory coefficients, Bernoulli numbers, Weierstra{\ss} $\wp$ function,
Painlev\'e equations, elliptic modular invariant, Kepler's equation,
dilogarithm}

\begin{abstract} Let $f=\sum_{n=0}^\infty f_n x^n \in \Qbar[[x]$ be a solution of an algebraic differential equation $Q(x,y(x), \ldots, y^{(k)}(x))=0$, where $Q$ is a multivariate polynomial with coefficients in~$\Qbar$.  
The sequence $(f_n)_{n\ge 0}$ satisfies a non-linear recurrence, whose expression involves a polynomial $M$ of degree $s$. When the equation is linear, $M$ is its indicial polynomial at the origin. We show  that when $M$ is split over $\mathbb Q$, there exist two positive integers $\delta$ and $\nu$ such that 
the denominator of $f_n$ divides $\delta^{n+1}(\nu n+\nu)!^{2s}$ for all $n\ge 0$, generalizing a well-known property when the equation is linear. This proves in this case a strong form of a conjecture of Mahler that P\'olya--Popken's upper bound $n^{\mathcal{O}(n\log(n))}$ for the denominator of $f_n$ is not optimal.
This also enables us to make Sibuya and Sperber's bound $\vert f_n\vert_v\le e^{\mathcal{O}(n)}$, for all finite places $v$ of~$\Qbar$, explicit in this case. Our method is completely effective and rests upon a detailed $p$-adic analysis of the above mentioned non-linear recurrences. Finally, we present various examples of differentially algebraic functions for which the associated polynomial $M$ is split over $\mathbb Q$, among which are Weierstra{\ss}' elliptic $\wp$ function, solutions of Painlev\'e equations, and Lagrange's solution to Kepler's equation.
\end{abstract}

\maketitle

\begin{flushright}
{\em In the memory of Bernard Malgrange}
\end{flushright}

\section{Introduction}

We denote by $\Qbar$ the field of algebraic numbers and by $\mathcal{O}_{\Qbar}$ the ring of algebraic integers.  
Let us consider a non-trivial algebraic differential equation 
\begin{equation} \label{eq:diffalgeq}
Q(x,y(x), \ldots, y^{(k)}(x))=0,
\end{equation}
where $Q\in \mathbb C[X,Y_0, \ldots, Y_{k}]$; here and in the sequel, by ``non-trivial'' we mean that $Q$ is of degree $\ge 1$ in at least one of the indeterminates $Y_j$. Let us assume that \eqref{eq:diffalgeq} has a power series solution $f(x):=\sum_{n=0}^\infty f_n x^n\in \mathbb C[[x]]$. We say that $f$ is a {\em differentially algebraic} power series, and write DA in short. For instance, the Taylor expansions at $x=0$ of $x/\log(1+x)$ and of $\tan(x)$ are DA power series, as these functions are solutions of the equations $x(1+x)y'+y^2-(1+x)y=0$ and $y'-y^2-1=0$, respectively, which are both Riccati equations.

A natural question arises: is it possible to bound  $\vert f_n \vert$ non-trivially from above and from below? In the Archimedean case, an upper bound was first obtained by Maillet~\cite{maillet}, and made more precise by various authors, in particular by Malgrange~\cite{malgrange}, and in the linear case by Perron~\cite{perron}. It is recalled in~\cite[p.~200]{mahler}
in the following form: there exist $\alpha, \beta>0$ that depend on $Q$ and $f$ such that $\vert f_n\vert \le \alpha n!^{\beta}$ for all $n\ge 0$. 
A lower bound was obtained by Popken~\cite{popken} under the additional assumption that $f_n\in \Qbar$ for all $n\ge 0$, when $\Qbar$ is embedded into $\mathbb C$: there exist $\alpha, \beta>0$ that depend on $Q$ and $f$ such that either $f_n=0$ or $\vert f_n\vert \ge \alpha n^{-\beta n\log(n)}$ for all $n\ge 1$, and  if $Q$ is a linear polynomial in $Y_0, \ldots, Y_k$ ({\em i.e.}, the differential equation is linear inhomogeneous with coefficients in $\mathbb C[X]$), then either $f_n=0$ or $\vert f_n\vert \ge \alpha n^{-
\beta n}$ for all $n\ge 1$. Similar lower bounds had been obtained earlier by P\'olya~\cite{polya} when $f_n\in \mathbb Q$.

Popken's lower bound has applications in the Theory of Transcendental Numbers, in particular to the values of Weierstra{\ss}' $\wp$ function; see \cite[pp.~207--212]{mahler}. It was reproved in detail by Mahler in~\cite[Chapter~8]{mahler}, following the method used by P\'olya~\cite {polya} in the rational case. There it is shown to be a simple consequence of the following result. Let $d_n$ denote the least positive integer such that $d_n f_n$ is in $\mathcal{O}_{\Qbar}$. We shall from now on say that $d_n$ is ``the denominator of'' $f_n$. Then there exist $\alpha, \beta>0$ that depend on $Q$ and $f$ such that 
\begin{equation}\label{eq:bounddnmahler}
d_n \le \alpha n^{\beta n\log(n)}, \quad n\ge 1,
\end{equation}
and if $Q$ is a linear polynomial in $Y_0, \ldots, Y_k$ then $d_n \le \alpha n^{\beta n}$  for all $n\ge 1$. Popken's theorem follows by combining Maillet's upper bound for the Galois conjugates $\sigma(f_n)$ of $f_n$ (because $\sum_n \sigma(f_n)x^n$ is also DA) and the upper bound \eqref{eq:bounddnmahler} for $d_n$, and then by considering the norm  of $d_nf_n$ over $\mathbb Q$, which is a positive integer when $f_n\neq 0$.

\medskip

Moreover, Mahler \cite[p.~212]{mahler} conjectured in rather~loose terms that the exponent $\beta n\log(n)$ in Popken's lower bound  {\em ``can probably be improved to something like''}\break ${\beta}n\log\log(n)$. This would make the general bound closer to the bound in the linear case. Such an improvement, which could be difficult to detect numerically, would follow immediately from the same improvement on the exponent of $n!$ in~\eqref{eq:bounddnmahler} because in Maillet's upper bound the exponent is even $\mathcal{O}(1)$. A different type of improvement of \eqref{eq:bounddnmahler} was made by Sibuya and Sperber in \cite[p.~112, Eq.~(III')]{SiSpAB}, who proved that, for all finite places~$v$ of~$\Qbar$, $\vert f_n\vert_{v} \le e^{c n}$ for some non-explicit constant $c>0$ that depends on $f$, $Q$, and~$v$; this had been conjectured by Dwork. This proves in particular that $f$ has positive $v$-adic radius of convergence in $\mathbb C_p$. Earlier, Mahler~\cite{mahlervadic} had proved a lower bound for $\vert f\vert_v$ similar to Popken's in the Archimedean case. We refer to \cite{SiSpAA} for a survey of Archimedean and non-Archimedean estimates for $f_n$ when $f$ is a DA power series, in particular for (possibly not optimal) explicit values  of the constants $\alpha$ and $\beta$.

Mahler's denominator conjecture was the starting point of our investigations, and this also led us naturally to study Sibuya and
Sperber's $v$-adic bounds.  
Let us first consider two simple examples in the linear case, both of hypergeometric type. 
The series $h_1(x):=\sum_{n=0}^{\infty} \frac{x^k}{\prod_{k=0}^{n-1} (k^2+1)}$ is a DA power series solution to 
$x^2y'''+xy''+(1-x)y'-y=0$, 
and the associated denominator $d_n$ satisfies
$d_n=
\prod_{k=0}^{n-1} (k^2+1) \le \prod_{k=0}^{n-1} (k+1)^2 = n!^2 
$,
as expected. Notice however that, conjecturally, there exist no integers $\delta,\nu, \mu, s\ge 0$ such that $d_n$ divides $\delta^{n+1}(\nu n+\mu)!^s$ for all $n\ge 0$~(\footnote{Assume on the contrary the existence of such integers. Then, any prime divisor $p$ of $d_n$ divides either $\delta$ or $(\nu n+\mu)!$, hence  $p\le \max(\delta, \nu n+\mu)$. But it is widely believed, because it is an instance of Bouniakovsky's conjecture, that there exist infinitely many primes of the form $m^2+1$ where $m\in \mathbb N$, so that we would have $(n-1)^2+1\le \max(\delta, \nu n+\mu)$ for infinitely many $n$. This is not possible. See \cite{sah} and references therein for estimates of the $p$-adic valuation of products of the form $\prod_{1\le k\le n} \vert q(k)\vert$, where $q$ is a polynomial of degree $\ge 2$, irreducible over $\mathbb Q$, and taking integer values at integer arguments.}). On the other hand, the series $h_2(x):=\sum_{n=0}^{\infty} \frac{x^k}{\prod_{k=0}^{n-1} (2k+1)}$ is also a DA power series solution to $2xy''+(1-x)y'-y=0$. Here, the associated denominator $d_n=\prod_{k=0}^{n-1} (2k+1)$ is not only bounded by 
$
\prod_{k=0}^{2n-1} (k+1) = (2n)! 
$,
but this time it {\em divides} 
$(2n)!$.  This is not surprising: it is well-known that such a divisibility phenomenon holds for all power series in~$\Qbar[[x]]$ that are solution of a homogeneous linear differential equation with coefficients in~$\Qbar[x]$ and whose indicial polynomial at the origin has only {\em rational\/} roots. This is the case for $h_2$ (roots 0 and $1/2$) but not for $h_1$ (roots 0 and $1\pm i$). See \S\ref{ssec:linearcase} for the details.

A little more surprising may be the fact that a divisibility phenomenon also numerically holds for certain DA series that are not necessarily solutions of a linear differential equation. Let us  consider for instance the series
$f(x)=1+\sum_{n=1}^\infty f_nx^n$ that satisfies the Riccati equation $xf'(x)-xf(x)^2+af(x)-a=0$, where $a\ge 1$ is a fixed integer. The sequence $(f_n)_{n\ge 0}$ satisfies the non-linear recurrence relation
\begin{equation}\label{eq:fna}
f_{n+1} =\frac{1}{n+a+1} \sum_{j=0}^n f_jf_{n-j}, \; \quad n\ge 0, \; f_0:=1,
\end{equation}
and we observed numerically that seemingly $n!\,(n+a)!\,f_n \in \mathbb Z$ for all $n\ge 0$. (Note that, if $a=0$, then $f_n=1$ for all $n\ge 0$.) Our first thought was that  this observation would be easy to prove by considering the sequence $\varphi_n:=n!\,(n+a)!\,f_n$, which is solution of the recurrence
$$
\varphi_{n+1}=\sum_{j=0}^n \frac{n+1}{(j+1)_{a}}\binom{n}{j}\binom{n+a}{j}\varphi_j\varphi_{n-j},  \;\quad n\ge 0, \;\varphi_0:=a!,
$$
where the {\it Pochhammer symbol\/} $(\alpha)_m$ is defined by $\al(\al+1)\cdots(\al+m-1)$ for $m\ge1$, and $(\al)_0=1$.
The binomial coefficients are obviously an important gain but this transformation comes with a new ``big'' denominator $(j+1)_{a}$. By a careful $p$-adic analysis of the summand (see Proposition~\ref{prop:fastallg1} in \S\ref{sec:prooftheomain}), we shall prove that $\delta^{n+1}\varphi_n\in \mathbb Z$ for all $n\ge 0$ for some integer $\delta\ge 1$. The proof of this arithmetic property, which is weaker than what seems to be true, is much more complicated than what one might expect at first sight. On the other hand, this yields a better result for $f_n$ than the upper bound \eqref{eq:bounddnmahler}, {\em i.e.}, we have $\delta^{n+1}n!\,(n+a)!\,f_n \in \mathbb Z$ for all $n\ge 0$, and it even proves Mahler's conjecture in this case in a stronger form. This also makes the upper bound for $\vert f\vert_v$ by Sibuya and Sperber explicit.

\medskip

To explain our contribution towards Mahler's denominator conjecture, we start by reviewing properties proved by Mahler in~\cite[Chapter~8]{mahler}. We assume that the differential equation~\eqref{eq:diffalgeq} has a solution 
$f(x):=\sum_{n=0}^\infty f_n x^n \in \Qbar[[x]]$. Then, by~\cite[p.~186, \S120]{mahler}, the differential equation can be assumed to have coefficients in~$\Qbar$ without loss of generality.
Mahler proved in~\cite[p.~194]{mahler} that the sequence of Taylor coefficients $(f_n)_{n\ge 0}$ satisfies a non-linear recurrence 
\begin{equation} \label{eq:Rek-Polyabis} 
f_{n+1}=\frac {1} {M(n)}
\sum_{\si=\si_1}^{\si_2}
\sum_{k=1}^{k_0}
\underset{0\le j_1,\dots,j_k\le n}{\sum_{j_1+\dots+j_k=n-\sigma}}
P_{\si,k}(n,j_1,j_2,\dots,j_k)f_{j_1}f_{j_2}\cdots
f_{j_k},\quad  \text{for }n\ge N,
\end{equation}
where  $N$ is some non-negative integer, 
$M(X)\in \mathcal{O}_{\Qbar}[X]$ 
vanishes for no $n\ge N$, the coefficients $P_{\si,k}(n,j_1,j_2,\dots,j_k)$ are in $\mathcal{O}_{\Qbar}$, the constants $\si_1$, $\si_2$ are integers, $s$ and $k_0$ are positive integers. More precisely, the $P_{\si,k}(n,j_1,j_2 \ldots, j_k)$ are piecewise polynomials in $n, j_1, j_2, \ldots, j_k$ with coefficients in $\mathcal{O}_{\Qbar}$. 
The initial values $f_0,f_1,\dots,f_N$ are algebraic numbers with common
denominator~$D$. It should be observed that the restriction
$j_1,\dots,j_k\le n$ can be safely ignored for {\it non-negative}~$\si$,
while it has an effect for {\it negative}~$\si$.  
The process to determine explicitly the recurrence 
\eqref{eq:Rek-Polyabis} from the polynomial $Q$ can be complicated in general but it is effective; it is a task that can be performed easily for simple examples. Following P\'olya in the rational case, Mahler then proved the upper bound on the denominator of~$f_n$ as follows: he showed by induction on $n$ that $\Delta_n:=\prod_{j=N}^n \vert A(j)\vert^{\lfloor \frac{(m-1)n+1}{(m-1)j+1}\rfloor}$ is a denominator of $f_n$ for $n\ge N$, where $A(X)\in \mathbb Z[x]$ vanishes for no integer $n\ge N$ and is constructed from $M$ in \cite[p.~202]{mahler}, and where the integer $m\ge 1$, which depends only on $Q$ and is defined in \cite[p.~187]{mahler}, is equal to 1 if and only if $Q$ is linear in $Y_0, \ldots, Y_k$. 
The denominator $d_n$ of~$f_n$ divides $\Delta_n$, which then has to be bounded. When $m=1$, the exponent of each $A(j)$ in the product for $\Delta_n$ is always 1 and thus independent of~$n$, while if $m\ge 2$ it depends on~$n$. This explains the difference between the upper bounds for $d_n$ in the linear and non-linear cases.

\medskip

In this paper, we shall consider the particular situation where {\em the polynomial $M$ in~\eqref{eq:Rek-Polyabis} is split over $\mathbb Q$}, {\em i.e.}, the sequence of Taylor coefficients $(f_n)_{n\ge 0}$ satisfies a non-linear recurrence of the form
\begin{multline} \label{eq:Rek-Polya} 
f_{n+1}=
\\
\frac {1} {C
  \prod _{i=1} ^{s}(a_in+b_i)}
\sum_{\si=\si_1}^{\si_2}
\sum_{k=1}^{k_0}
\underset{0\le j_1,\dots,j_k\le n}{\sum_{j_1+\dots+j_k=n-\sigma}}
P_{\si,k}(n,j_1,j_2,\dots,j_k)f_{j_1}f_{j_2}\cdots
f_{j_k},\quad  \text{for } n\ge N,
\end{multline}
where $N$
is some non-negative integer,  the coefficients $P_{\si, k}(n,j_1,j_2,\dots,j_k)$ are in $\mathcal{O}_{\Qbar}$~(\footnote{Due to the normalization adopted in~\eqref{eq:Rek-Polya}, the coefficients $P_{\si,k}$ in~\eqref{eq:Rek-Polya} might differ from the $P_{\si,k}$ in~\eqref{eq:Rek-Polyabis} by a common non-zero algebraic factor (independent of $\si$ and $k$).  The important point for us is that we can still assume without loss of generality that they are in $\mathcal{O}_{\Qbar}$; indeed, this can always be achieved by changing $C$ to $mC$ for a suitable integer $m\ge 2$ if necessary.}), 
$C$~
is a non-zero integer,  $\si_1$, $\si_2$ are integers, $s$ and $k_0$ are positive integers, 
the $b_i$'s are integers, the~$a_i$'s
are positive integers such that $\gcd(a_i,b_i)=1$ and $a_in+b_i\neq 0$ for all~$i\in \{1, \ldots, s\}$ and all $n\ge N$. 
The initial values $f_0,f_1,\dots,f_N$ are algebraic numbers with common
denominator~$D$.  
In this situation, our main result is a proof of Mahler's denominator conjecture in a much stronger form, {\em i.e.}, a divisibility property of $d_n$ which in particular implies that $\log(n)$ can be replaced by $\mathcal{O}(1)$, not just $\log\log(n)$.
\begin{theo}\label{theo:main2}
Let  $f\in \Qbar[[x]$ be a solution of a non-trivial algebraic differential equation $Q(x,y(x), \ldots, y^{(k)}(x))=0$, where $Q\in \Qbar[X,Y_0, \ldots, Y_{k}]$.
More specifically, we assume that the sequence $(f_n)_{n\ge 0}$ of Taylor coefficients of $f$ satisfies a non-linear recurrence of the form~\eqref{eq:Rek-Polya}. 

Then there exist two positive integers $\delta$ and $\nu$ such that 
the denominator of $f_n$ divides $\delta^{n+1}(\nu n+\nu)!^{2s}$ for all $n\ge 0$,
where $s$ is the degree of the denominator polynomial on the
right-hand side of~\eqref{eq:Rek-Polya}.
\end{theo}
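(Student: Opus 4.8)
The plan is to induct on $n$, carrying along a suitably chosen candidate denominator. Concretely, fix the recurrence~\eqref{eq:Rek-Polya} and let $P$ be a positive integer divisible by $C$, by $D$, by all the $a_i$, and by enough small primes so that the $p$-adic analysis below only needs to be carried out for primes $p>P$. For such primes, $v_p(C\prod_i(a_in+b_i))=\sum_i v_p(a_in+b_i)$, and since $\gcd(a_i,b_i)=1$ this valuation is controlled by how often $n$ lands in the residue class $-b_i a_i^{-1}\pmod{p^r}$. I would guess the candidate denominator in the form $\Delta_n:=P^{n+1}\prod_{i=1}^s \bigl(\text{product over }N\le j\le n\text{ of }(a_ij+b_i)\bigr)^{e(n,j)}$ for a carefully chosen exponent function $e(n,j)$, and then show $\Delta_n$ is a denominator of $f_n$; the final step is to bound $\Delta_n$ by $\delta^{n+1}(\nu n+\nu)!^{2s}$ for suitable $\delta,\nu$.

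The core of the argument is the inductive step, which is exactly where Proposition~\ref{prop:fastallg1} (the $p$-adic analysis of the summand) is meant to be applied. Write $f_{j_\ell}$ as $\Delta_{j_\ell}^{-1}$ times an algebraic integer; then a typical term on the right of~\eqref{eq:Rek-Polya} becomes
\[
\frac{1}{C\prod_i(a_in+b_i)}\cdot\frac{P_{\si,k}(n,\mathbf j)}{\prod_{\ell=1}^k\Delta_{j_\ell}}
\]
times a product of algebraic integers, and I must show that multiplying by $\Delta_{n+1}$ clears all denominators at every prime $p>P$. Fix such a $p$. The quantity $v_p\bigl(\prod_\ell\Delta_{j_\ell}^{-1}\bigr)+v_p(\Delta_{n+1})-v_p\bigl(C\prod_i(a_in+b_i)\bigr)$ must be shown to be $\ge0$; since $j_1+\dots+j_k=n-\sigma$ with all $j_\ell\le n$, this is a statement about how the exponent function $e(\cdot,\cdot)$ behaves under a "sub-additivity with one extra block" constraint, together with a uniform lower bound on $v_p$ of the product of the $(a_ij_\ell+b_i)$ appearing in the $\Delta_{j_\ell}$ — and this is precisely the kind of estimate isolated in Proposition~\ref{prop:fastallg1}. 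The key combinatorial fact to extract is: for each $i$, among indices $j$ in $[N,n]$, the number lying in a fixed class mod $p^r$ is at least $\lfloor (n-N+1)/p^r\rfloor$, so the product $\prod_{N\le j\le n}(a_ij+b_i)$ already supplies $v_p\ge \sum_{r\ge1}\lfloor(n-N+1)/p^r\rfloor\ge v_p((\lfloor(n-N+1)/?\rfloor)!) - O(\log_p n)$, i.e. roughly a factorial's worth of $p$-divisibility "for free", which is what one trades against the denominators coming from the $\Delta_{j_\ell}$ in the induction.

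The main obstacle — and the reason the authors warn the proof is "much more complicated than one might expect" — is making the inductive exponent bookkeeping close. One must choose $e(n,j)$ (morally of size $2s$ asymptotically, matching the $(\nu n+\nu)!^{2s}$ in the conclusion: roughly $s$ from the two $\Delta_{j_\ell}$-type contributions on each side of a binary-like split, doubled) so that simultaneously (i) the induction hypothesis feeds through the convolution — here the constraint $\sum j_\ell=n-\sigma$ rather than $=n$, and the possibility of negative $\sigma$ with the active restriction $j_\ell\le n$, both require care — and (ii) the one extra factor $\prod_i(a_in+b_i)$ in the denominator of~\eqref{eq:Rek-Polya} is absorbed, which is why the exponent cannot simply be constant and why a base of $P^{n+1}$ (an honest geometric factor, for the bounded primes and for slack) is indispensable. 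Once $\Delta_n\mid\delta^{n+1}(\nu n+\nu)!^{2s}$ is checked by a direct estimate of $v_p(\Delta_n)$ against $v_p((\nu n+\nu)!^{2s})$ using Legendre's formula — choosing $\nu$ a common multiple of the $a_i$ and $\delta$ to absorb $P$, the bounded primes, the finitely many initial denominators, and the $O(\log_p n)$ losses — the theorem follows. I expect that getting the precise value $2s$ (rather than some larger constant times $s$) in the exponent will require the sharp form of the $p$-adic estimate and is the delicate quantitative point.
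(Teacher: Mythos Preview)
Your overall architecture --- induct on $n$ with a candidate denominator, and verify the inductive step by a prime-by-prime estimate --- matches the paper's. But the proposal has a genuine gap at the one place that matters: the choice of the candidate denominator. You leave $e(n,j)$ unspecified, and your remark that ``the exponent cannot simply be constant'' points in the wrong direction. The paper's key move is precisely to take a \emph{closed-form factorial} ansatz with no $n$-dependent exponent at all:
\[
d_n = C^{n}D^{(k_0-1)n+1}\Pi^{n}\prod_{i=1}^{s}(a_in)!\,(a_in+b_i-a_i)!,
\]
that is, \emph{two} factorials per index~$i$ (this is where the $2s$ comes from, not from any doubling along a ``binary-like split''), together with a purely geometric factor $C^nD^{(k_0-1)n+1}\Pi^n$ that absorbs~$C$, the initial denominators, and all primes below an explicit bound. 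With this choice, multiplying the recurrence by $d_{n+1}$ turns each summand into a product of algebraic integers times a \emph{ratio of factorials}
\[
\prod_{i=1}^{s}\frac{(a_in+a_i)!\,(a_in+b_i-1)!}{\prod_{t}(a_ij_t)!\,(a_ij_t+b_i-a_i)!},
\]
and the inductive step is exactly the statement that this ratio is (up to the primes already in~$\Pi$) an integer. That is what Proposition~\ref{prop:fastallg1} says; it is a Legendre-formula estimate on factorial ratios, not the ``residue-class counting for $\prod_j(a_ij+b_i)$'' that you describe. Your $\Delta_n$ as written is not in a form to which Proposition~\ref{prop:fastallg1} applies.

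A second missing ingredient is the handling of negative~$\sigma$. The paper does not push this through the induction directly; it first performs an explicit change of variables (Lemma~\ref{lem:neg->pos}) that replaces the recurrence by an equivalent one with $\sigma_1\ge 0$, at the cost of enlarging the set of initial values. Only then does the factorial ansatz and Proposition~\ref{prop:fastallg1} apply cleanly. Your proposal acknowledges that negative~$\sigma$ ``requires care'' but gives no mechanism; without this reduction (or a substitute), the sub-additivity you need along $j_1+\dots+j_k=n-\sigma$ with the constraint $j_\ell\le n$ does not go through.
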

As a consequence, we obtain an explicit version of the $v$-adic upper bound of Sibuya and Sperber in this particular situation, which seems to be new.
\begin{coro} \label{coro:1} In the setting of Theorem \ref{theo:main2}, 
for all finite places $v$ of~$\Qbar$ over any given rational prime number~$p$, we have 
\begin{equation}\label{eq:vadicbound}
\vert f_n\vert_v \le p^{\left(v_p(\delta)+\frac{2s\nu}{p-1}\right)(n+1)}, \quad n\ge 0,
\end{equation}
where we take the standard normalization $\vert p\vert_v:=1/p$.
\end{coro}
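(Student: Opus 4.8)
The plan is to obtain Corollary~\ref{coro:1} as an immediate consequence of the divisibility asserted in Theorem~\ref{theo:main2}, combined with Legendre's formula for the $p$-adic valuation of a factorial. Recall first that, by definition, $d_n$ is the least positive \emph{rational} integer such that $d_nf_n\in\mathcal{O}_{\Qbar}$; in particular $\vert d_nf_n\vert_v\le 1$ for every finite place $v$ of $\Qbar$, so that $\vert f_n\vert_v\le\vert d_n\vert_v^{-1}$. If $v$ lies above the rational prime $p$ and is normalised by $\vert p\vert_v=1/p$, then the restriction of $\vert\cdot\vert_v$ to $\mathbb Q$ is the usual $p$-adic absolute value; hence $\vert d_n\vert_v=p^{-v_p(d_n)}$ and therefore $\vert f_n\vert_v\le p^{v_p(d_n)}$.

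Next I would invoke Theorem~\ref{theo:main2}: since $d_n$ divides $\delta^{n+1}(\nu n+\nu)!^{2s}$, taking $p$-adic valuations gives
\begin{equation*}
v_p(d_n)\le (n+1)\,v_p(\delta)+2s\,v_p\bigl((\nu(n+1))!\bigr).
\end{equation*}
Legendre's formula $v_p(m!)=\sum_{i\ge 1}\fl{m/p^i}\le\sum_{i\ge 1}m/p^i=m/(p-1)$, applied with $m=\nu(n+1)$, yields $v_p\bigl((\nu(n+1))!\bigr)\le\nu(n+1)/(p-1)$. Combining the last two estimates,
\begin{equation*}
v_p(d_n)\le\Bigl(v_p(\delta)+\frac{2s\nu}{p-1}\Bigr)(n+1),
\end{equation*}
and substituting this bound into $\vert f_n\vert_v\le p^{v_p(d_n)}$ produces exactly inequality~\eqref{eq:vadicbound}.

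There is no genuine obstacle here; all the difficulty is already contained in Theorem~\ref{theo:main2}, and the corollary is merely the routine translation of a denominator divisibility into a non-Archimedean estimate. The two points deserving a line of care are: (i) that $d_n$ is a rational integer, so that $\vert d_n\vert_v$ is governed by the single rational valuation $v_p$ and no ramification index of $v$ over $p$ enters the bound; and (ii) the elementary inequality $v_p(m!)\le m/(p-1)$, which is what produces the term $2s\nu/(p-1)$. One may add the remark that~\eqref{eq:vadicbound} forces the $v$-adic radius of convergence of $f$ in $\mathbb C_p$ to be at least $p^{-v_p(\delta)-2s\nu/(p-1)}>0$, thereby recovering effectively the qualitative theorem of Sibuya and Sperber in the split case.
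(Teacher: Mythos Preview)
Your proof is correct and follows exactly the same route as the paper: invoke the divisibility $d_n\mid\delta^{n+1}(\nu n+\nu)!^{2s}$ from Theorem~\ref{theo:main2}, bound $v_p\big((\nu(n+1))!\big)$ by $\nu(n+1)/(p-1)$ via Legendre's formula, and translate the resulting inequality on $v_p(d_n)$ into the $v$-adic bound~\eqref{eq:vadicbound}. The additional remark on the $v$-adic radius of convergence is a welcome complement.
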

(On the right-hand side of \eqref{eq:vadicbound}, $v_p$ denotes the usual $p$-adic valuation, and the exponent involves $\nu$ and not the place $v$.)
 The proof of Theorem \ref{theo:main2} provides completely explicit, albeit complicated, formulas for $\delta$ and $\nu$ in terms of $C,D$, $k_0$, $a_1,a_2,\dots,a_s$ and $b_1,b_2,\dots,b_s$. When $\sigma_1\ge 0$, these formulas, in a refined form, are given by Theorem~\ref{thm:main} in \S\ref{sec:prooftheomain}; they are not always sharp. When $\sigma_1<0$ for a sequence $(f_n)_{n\ge 0}$ defined for all $n\ge N$ by a recurrence like~\eqref{eq:Rek-Polya}, we shall prove in a constructive way (Lemma~\ref{lem:neg->pos}) that we can always find another recurrence for $(f_n)_{n\ge 0}$ like~\eqref{eq:Rek-Polya}  in which now $\sigma_1\ge 0$, but for $n\ge \widetilde{N}$ which may differ from~$N$. We shall explain in \S\ref{ssec:lemma1} that Lemma \ref{lem:neg->pos} is a variant of \cite[p.~382, Lemma~2.2]{SiSpAA}. The latter also provides in principle another way of finding a recurrence of type \eqref{eq:Rek-Polya} from a recurrence of type \eqref{eq:Rek-Polyabis}, even though it seems not to be entirely constructive, as we explain at the end of \S\ref{ssec:lemma1}.

 The arithmetic assumption on the polynomial $M$ implies in particular the rationality of all the roots (known as the local exponents) of the indicial polynomial at 0 when $Q$ is  linear in $Y_0, \ldots, Y_k$. The procedure in \cite[p.~382]{SiSpAA} has the theoretical interest of explaining why $M$ can be
viewed as a natural generalization to the non-linear case of the indicial polynomial at the origin in the linear case. A similar assumption ap\-pears in~\cite[p.~400, $(ii)$]{SiSpAA}, where Popken's lower bound is then improved to $\vert f_n\vert\ge \alpha n^{-\beta n}$ for some non-explicit positive constants $\alpha$ and $\beta$. It does not seem that the stronger and more precise divisibility properties in  Theorem~\ref{theo:main2} and Theorem \ref{thm:main} below are also proven. It turns out that the arithmetic assumption holds in many interesting situations.  
We shall elaborate on these examples in \S\ref{sec:examples}.

\medskip

In the next section, we present our proof of Theorem~\ref{theo:main2}.
It consists in several steps. In Lemma~\ref{lem:neg->pos} we show that, 
if in the sum on the right-hand side of~\eqref{eq:Rek-Polya} negative~$\si$'s
contribute, then one can find an equivalent recurrence of the same type
without negative~$\si$'s contributing; we also compare it with Lemma 2.2 in \cite{SiSpAA}. We then restate Theorem~\ref{theo:main2}
in a more precise, but also {\it more general\/} form, see Theorem~\ref{thm:main}.
The key arithmetic part of its proof is taken care of separately in
Proposition~\ref{prop:fastallg1}. In the last section, Section~\ref{sec:examples},
we discuss many examples in which our theorems apply, amongst which are
Gregory coefficients, tangent numbers, Bernoulli numbers, Taylor coefficients of the
Weierstra{\ss} elliptic function, of solutions to
Painlev\'e equations, of modular forms, of solutions to Kepler's equation,
of the compositional inverse of the dilogarithm. We conclude the paper with a discussion of Mahler's conjecture on a simple example of a recurrence for which $M$ is non-split over $\mathbb Q$.

\section{Proof of Theorem \ref{theo:main2}} \label{sec:prooftheomain}

\subsection{An effective procedure to remove negative $\sigma$'s}\label{ssec:lemma1}

In this subsection, we show that, if there are negative $\si$'s contributing
to the right-hand side of~\eqref{eq:Rek-Polya}, then there is an
equivalent recurrence with all the characteristics of~\eqref{eq:Rek-Polya} but which avoids negative~$\si$'s, {\em i.e.}, it satisfies $\sigma_1\ge 0$. By equivalent recurrences of type~\eqref{eq:Rek-Polya}, we mean that the given sequence $(f_n)_{n\ge 0}$ satisfies both recurrences for all $n$ large enough, but ``large enough'' is different in each recurrence. At the end of this subsection, we shall also mention a non-constructive variant of Lemma \ref{lem:neg->pos}, due to Sibuya and Sperber, which nonetheless has important theoretical consequences for us.

\begin{lem} \label{lem:neg->pos}
For any recurrence of the form~\eqref{eq:Rek-Polya}
in which negative $\si$'s appear,
there is an equivalent one with only non-negative $\si$'s.
\end{lem}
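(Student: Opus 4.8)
The plan is to eliminate the negative values of $\si$ one at a time by substituting the recurrence into itself — a "bootstrap" substitution reminiscent of the method of undetermined coefficients. Suppose $\si_1<0$, and consider the contribution to the right-hand side of~\eqref{eq:Rek-Polya} coming from the most negative index $\si=\si_1$. Here the constraint $j_1+\dots+j_k=n-\si_1=n+\vert\si_1\vert$ together with $j_i\le n$ is a genuine restriction: in each monomial $f_{j_1}\cdots f_{j_k}$ at least one index, say $j_1$, must satisfy $j_1\ge \lceil(n+\vert\si_1\vert)/k\rceil$, hence $j_1>n-\vert\si_1\vert$ (for $n$ large relative to $\vert\si_1\vert$, $k_0$). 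The idea is that this "large" index $j_1$ lies in the range where $f_{j_1}$ itself is given by the recurrence~\eqref{eq:Rek-Polya}, so we may replace $f_{j_1}$ by its expression in terms of $f_{j'_1},\dots$ with strictly smaller indices. Doing so lowers the largest index appearing, and — after clearing the denominator $C\prod_i(a_i j_1+b_i)$ that this substitution introduces, against the overall denominator $C\prod_i(a_i n+b_i)$ by means of the polynomial identity $a_i(j_1+1)+b_i - (a_i n + b_i) = a_i(j_1+1-n)$, so that $\prod_i(a_i n+b_i)$ and $\prod_i(a_i j_1+b_i)$ differ by a polynomial factor once $n-j_1$ is bounded — produces a new recurrence, still of the shape~\eqref{eq:Rek-Polya} with the same $C$ (up to an integer multiple, as allowed by the footnote) and the same $a_i,b_i$, but with the offending term $\si=\si_1$ replaced by terms with strictly larger $\si$ (and possibly larger $k$, bounded by $k_0+k_0-1<2k_0$, which we re-absorb by enlarging $k_0$).

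More carefully, I would proceed by a downward induction on the quantity $-\si_1\ge 1$. Fix the recurrence, isolate all summands with $\si=\si_1$, and in each such summand pick the index variable that is forced to be large (if several are large, pick any one, say with a fixed tie-breaking rule). Write $j_1+1 = n-\ell$ where $0\le \ell \le (k-1)\,\fl{(n+\vert\si_1\vert)/1} \le$ some bound; in fact since the other $k-1$ indices are non-negative and sum to $n-\si_1-j_1$, one gets $\ell = n-1-j_1$ with $j_1 = n-\si_1 - (j_2+\dots+j_k)$, so $\ell$ ranges over a \emph{bounded} set of non-negative integers depending only on $\si_1,k_0$, \emph{provided} we first split off the finitely many exceptional small-$n$ cases and absorb them into $N$ (replacing $N$ by a larger $\widetilde N$, which the statement explicitly permits). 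Substituting the recurrence~\eqref{eq:Rek-Polya} for $f_{j_1}=f_{(n-\ell)-1+1}$, valid because $n-\ell-1\ge N$ for $n\ge \widetilde N$, expresses $f_{j_1}$ as $\frac{1}{C\prod_i(a_i(n-\ell-1)+b_i)}\times(\text{sum of products of }f\text{'s with indices}\le n-\ell-1-\si_1\le n-\si_1-1)$. The new summands have top index at most $n-\si_1-1$, i.e.\ the new most-negative shift is $\ge \si_1+1$. The prefactor $1/\prod_i(a_i(n-\ell-1)+b_i)$ is cancelled against the global $\prod_i(a_i n+b_i)$ because $a_i(n-\ell-1)+b_i = (a_i n+b_i) - a_i(\ell+1)$ and $\ell$ is bounded, so $\prod_i(a_i(n-\ell-1)+b_i)$ divides $\prod_i(a_i n+b_i)\cdot\prod_i\prod_{0\le t\le \ell}(a_i(n-t-1)+b_i)$ up to an integer constant; multiplying numerator and denominator appropriately restores the exact form $1/(C'\prod_i(a_i n+b_i))$ with $C'$ an integer multiple of $C$, the new $P_{\si,k}$ still in $\mathcal O_{\Qbar}$, and $\gcd(a_i,b_i)=1$ untouched. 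Iterating $-\si_1$ times reaches $\si_1\ge 0$.

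The main obstacle I anticipate is bookkeeping rather than conceptual: one must check that after each substitution the new coefficients $P_{\si,k}$ remain \emph{piecewise-polynomial with algebraic-integer coefficients} — the substitution multiplies polynomials in the index variables and evaluates some of them at shifted arguments, which preserves this class, but the case distinctions (which index is the large one, which of the bounded values $\ell$ takes) multiply, so one has to argue that only finitely many pieces arise and that the restrictions $0\le j_i\le n$ are correctly propagated. A second point requiring care is that the new recurrence only needs to hold for $n\ge\widetilde N$ with $\widetilde N>N$ — this is why the lemma is stated for "equivalent" recurrences in the sense defined in the text, and one should state explicitly how $\widetilde N$ grows (linearly in $\vert\si_1\vert$ and $k_0$). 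Finally, the degenerate possibility that after substitution some product $f_{j_1}\cdots f_{j_k}$ collapses (e.g.\ a resulting index equals $n+1$, i.e.\ $\si=-1$ reappears) must be excluded: this is precisely prevented by the strict inequality top-index $\le n-\si_1-1\le n$ coming from $\si_1\le -1$, so no new $f_{n+1}$ is created on the right and the recurrence stays well-formed. With these checks in place, the downward induction on $-\si_1$ terminates with a recurrence of type~\eqref{eq:Rek-Polya} having $\si_1\ge 0$, proving the lemma.
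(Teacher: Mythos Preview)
Your bootstrap substitution has a genuine gap. The claim that the gap $\ell=n-1-j_1$ ranges over a \emph{bounded} set is false for $k\ge2$: from $j_1+\dots+j_k=n-\si_1$ with $0\le j_t\le n$ and $j_1$ chosen maximal, pigeonhole gives only $j_1\ge(n-\si_1)/k$, so $\ell=n-1-j_1$ can be as large as roughly $(1-1/k)\,n$. Consequently the extra denominator $\prod_i\big(a_i(j_1-1)+b_i\big)$ that the substitution introduces is a genuine polynomial in $j_1$, not a bounded shift of $\prod_i(a_in+b_i)$, and it cannot be absorbed into $C'\prod_i(a_in+b_i)$ as you propose.

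Even setting the denominator issue aside, the induction does not advance. If in the $\si=\si_1$ term you replace $f_{j_1}$ by the recurrence, the inner recurrence itself contributes all shifts $\si'\in[\si_1,\si_2]$. The resulting product $f_{j_1'}\cdots f_{j_{k'}'}f_{j_2}\cdots f_{j_k}$ has index sum $(j_1-1-\si')+(j_2+\dots+j_k)=n-(\si_1+1+\si')$, so the new shift is $\si_1+1+\si'$. Taking $\si'=\si_1$ gives shift $2\si_1+1$, which for $\si_1\le-1$ is \emph{at least as negative} as $\si_1$ (and strictly more negative once $\si_1\le-2$). So the ``downward induction on $-\si_1$'' does not terminate; one substitution can make the most negative shift worse.

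The paper avoids substitution entirely. It performs a single index shift $g_m:=f_{m-\si_1}$ after first isolating, as constants, the finitely many initial values $f_0,\dots,f_{-\si_1-1}$ that occur in each product. Concretely, in each summand one separates the factors $f_{j_t}$ with $j_t<-\si_1$ (there are, say, $\ell$ of them; these are now fixed algebraic numbers), shifts the remaining indices down by $-\si_1$, and then replaces $n$ by $n-\si_1$. The new shift becomes $\bar\si=\si+j_1+\dots+j_\ell-(k-\ell-1)\si_1$, and one checks directly that $\bar\si\ge0$ whenever the inner sum is nonempty (taking $\bar N$ large enough to kill the degenerate case $k=\ell$). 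No iteration is needed, the denominator $\prod_i(a_in+b_i)$ only changes by the harmless translation $b_i\mapsto b_i-a_i\si_1$, and the $P_{\si,k}$ remain algebraic integers because the separated $f_{j_t}$'s are absorbed into them after clearing the common denominator~$D$.
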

The proof of Lemma~\ref{lem:neg->pos} is constructive, and free of any indeterminacy. As an inspection of the proof reveals, the conclusion of Lemma~\ref{lem:neg->pos} holds more generally also for a recurrence of type~\eqref{eq:Rek-Polyabis}.
\begin{proof}
We consider a recurrence \eqref{eq:Rek-Polya} in which $\si_1$ is negative.
Let $\bar N:=\max\{N,\si_2-k_1\si_1\}$. (The background for this choice will
become clear later on.)
We assume the $f_j$'s  with $j<\bar N$ as initial conditions,
meaning that $f_j$ with $N<j<\bar N$ has been computed via
the recurrence~\eqref{eq:Rek-Polya}.

Now, in the sums over~$j_t$'s,
we isolate terms containing $f_j$'s with $j<-\si_1$
(recall  that $-\si_1>0$) in
separate sums. This converts~\eqref{eq:Rek-Polya} into
\begin{align*} 
f&_{n+1}=\frac {1} {C
  \prod _{i=1} ^{s}(a_in+b_i)}
\sum_{\si=\si_1}^{\si_2}
\sum_{k=1}^{k_0}
\sum_{\ell=0}^k\binom k\ell
\sum_{0\le j_1,\dots,j_\ell<-\si_1}
f_{j_1}\cdots f_{j_\ell}\\
&\kern2cm
\cdot
\underset{-\si_1\le j_{\ell+1},\dots,j_k\le n}{\sum_{j_{\ell+1}+\dots+j_k
    =n-\sigma-j_1-\dots-j_\ell}}
P_{\si, k}(n,j_1,j_2,\dots,j_k)f_{j_{\ell+1}}\cdots f_{j_k}
\\
\end{align*}
\begin{align*}
&=\frac {1} {C
  \prod _{i=1} ^{s}(a_in+b_i)}
\sum_{\si=\si_1}^{\si_2}
\sum_{k=1}^{k_0}
\sum_{\ell=0}^k\binom k\ell
\sum_{0\le j_1,\dots,j_\ell<-\si_1}
f_{j_1}\cdots f_{j_\ell}
\\
&\kern10pt
\cdot
\underset{0\le j_{\ell+1},\dots,j_k\le n+\si_1}{\sum_{j_{\ell+1}+\dots+j_k
    =n-\sigma-j_1-\dots-j_\ell+(k-\ell)\si_1}}
P_{\si, k}(n,j_1,\dots,j_\ell,j_{\ell+1}-\si_1,
\dots,j_k-\si_1)f_{j_{\ell+1}-\si_1}\cdots f_{j_k-\si_1}.
\end{align*}
The binomial coefficient arises since the $\ell$ terms $f_j$ with
$j<-\si_1$ might appear at any of the $k$~possible positions.

\medskip

Now we write $g_m=f_{m-\si_1}$. In this notation, the above recurrence
reads
\begin{multline*} 
g_{n+1+\si_1}
=\frac {1} {C
  \prod _{i=1} ^{s}(a_in+b_i)}
\sum_{\si=\si_1}^{\si_2}
\sum_{k=1}^{k_0}
\sum_{\ell=0}^k\binom k\ell
\sum_{0\le j_1,\dots,j_\ell<-\si_1}
f_{j_1}\cdots f_{j_\ell}
\\
\cdot
\underset{0\le j_{\ell+1},\dots,j_k\le n+\si_1}{\sum_{j_{\ell+1}+\dots+j_k
    =n-\sigma-j_1-\dots-j_\ell+(k-\ell)\si_1}}
P_{\si, k}(n,j_1,\dots,j_\ell,j_{\ell+1}-\si_1,
\dots,j_k-\si_1)g_{j_{\ell+1}}\cdots g_{j_k},
\end{multline*}
or, after performing the shift $n\mapsto n-\si_1$,
\begin{multline} 
g_{n+1}
=\frac {1} {C
  \prod _{i=1} ^{s}(a_in-a_i\si_1+b_i)}
\sum_{\si=\si_1}^{\si_2}
\sum_{k=1}^{k_0}
\sum_{\ell=0}^k\binom k\ell
\sum_{0\le j_1,\dots,j_\ell<-\si_1}
f_{j_1}\cdots f_{j_\ell}
\\
\cdot
\underset{0\le j_{\ell+1},\dots,j_k\le n}{\sum_{j_{\ell+1}+\dots+j_k
    =n-\sigma-j_1-\dots-j_\ell+(k-\ell-1)\si_1}}
P_{\si, k}(n-\si_1,j_1,\dots,j_\ell,j_{\ell+1}-\si_1,
\dots,j_k-\si_1)g_{j_{\ell+1}}\cdots g_{j_k},\\
\quad \text{for }n\ge\bar N.
\label{eq:Rek-Polya2}
\end{multline}
Define $\bar\si:=\sigma+j_1+\dots+j_\ell-(k-\ell-1)\si_1$.
If $k>\ell+1$, we have
$\bar\si\ge \sigma-\si_1\ge0$.
This implies that in this case the inner sum in~\eqref{eq:Rek-Polya2}
runs over non-negative integers $j_{\ell+1},\dots, j_k$ with
$j_{\ell+1}+\dots+j_k=n-\bar\sigma$, where $\bar\si\ge0$.
The case $k=\ell+1$ does not need to be considered since then the inner sum
consists of just one (or no term).
The equality $k=\ell$ produces an empty sum since
the condition on the summation indices in the inner sum
in~\eqref{eq:Rek-Polya2} (the empty sum being equal to zero) becomes
\begin{equation} \label{eq:contra} 
0=n-\sigma-j_1-\dots-j_\ell+(k-\ell-1)\si_1
=n-\sigma-j_1-\dots-j_k-\si_1
\end{equation}
in this case, but
$$
n-\sigma-j_1-\dots-j_k-\si_1
> \bar N-\sigma+k\si_1-\si_1
\ge \si_2-\sigma-\si_1> 0
$$
by the definition of $\bar  N$, a contradiction with~\eqref{eq:contra}.

In summary, the recurrence \eqref{eq:Rek-Polya2} for
$\big(g_n\big)_{n\ge0}=\big(f_{n-\si_1}\big)_{n\ge0}$ is indeed a
recurrence of the form~\eqref{eq:Rek-Polya} without negative~$\si$'s;
in other words, it is of the form~\eqref{eq:Rek-Polya} with $\si_1=0$.
\end{proof}

As mentioned before, Sibuya and Sperber proved an important lemma that leads to the same conclusion as our Lemma \ref{lem:neg->pos}. Starting from a solution $f=\sum_{n=0}^\infty f_n x^n\in \Qbar[[x]]$\break of a non-trivial differential algebraic equation $Q(x,y,\ldots, y^{(k)})=0$ with coefficients\break in~$\Qbar$, they first consider a non-trivial differential equation $\widetilde{Q}(x,y,\ldots, y^{(\ell)})=0$ with $\widetilde{Q}\in \Qbar[X, Y_0, \ldots, Y_\ell]$ of which $f$ is still a solution. They require that $\ell\ge 1$ is minimal amongst all differential algebraic equations satisfied by $f$, and that the degree of $\widetilde{Q}$ in $Y_\ell$ is also minimal. Notice that $\ell$ is the transcendence degree of the field generated over~$\Qbar(x)$ by~$f$ and all its derivatives. 
This in particular ensures that  
$$
\frac{\partial \widetilde{Q}}{\partial Y_\ell}(x,f, \ldots,  f^{(\ell)})\neq 0,
$$ 
which is crucial. However, Sibuya and Sperber give no way to effectively compute~ $\ell$ and to minimize the degree in $Y_\ell$ starting from~$f$ and~$Q$ above. 
They canonically attach to $\widetilde{Q}$ and~$f$ a linear operator $L_0\in \Qbar[[x]][\frac{d}{dx}]$ of order $\ell$ and then consider the indicial polynomial $P_0$  at the origin of $L_0$. Then \cite[Lemma~2.2]{SiSpAA} says the following:

\medskip\noindent
    {\em For any $c\ge 0$, there exist integers $N'\ge 0,N\ge N', N''\ge c$ such that $u:=\sum_{n=N}^\infty f_n x^{n-N'} \in \Qbar[[x]]$ is a solution of a differential equation
$L(u)=x^{N''} F(x,u,u', \ldots, u^{(\ell)})$ where $F\in \mathcal{O}_{\Qbar}[X, Y_0, \ldots, Y_{\ell}]$ and $L$ is a linear differential operator in $\mathcal{O}_{\Qbar}[x][\frac{d}{dx}]$ of order $\ell$
and whose leading coefficient is independent of $N''$.}

\medskip
Moreover, the indicial polynomial $P(X)$ at the origin of $L$ is simply $P_0(X+N')$; in particular, if $P_0$ is split over $\mathbb Q$, then $P$ is as well. 
From the particular form of the equation satisfied by $u$, Sibuya and Sperber deduce a recurrence for $(f_n)_{n\ge 0}$ which is of the form \eqref{eq:Rek-Polyabis} with $\sigma_1\ge 0$ (provided $N''$ is large enough, which is possible because $c$ is arbitrary) and with $M(X)=P_0(X+N'+1)$. Therefore, this gives an interpretation of $M$ in terms of the indicial polynomial~$P_0$ attached to $\widetilde{Q}$ and $f$. It is important to observe that $P_0$ may be different for another solution in~$\Qbar[[x]]$ of the differential equation $Q(x,y,\ldots, y^{(k)})=0$.

Sibuya and Sperber also drew a few consequences from their lemma in \cite[p.~384]{SiSpAA} (encompassed by our Theorem~\ref{theo:main2}) when $P_0$ is a constant, when $0$ is an ordinary point of $L_0$, and when $F(x,u,u', \ldots, u^{(k)})=F(x)$ (which happens when $\widetilde{Q}$ is linear in $Y_0, \ldots, Y_k$). In the latter case, they recovered Popken's upper bound for the denominator of $f_n$ in the linear case; this is in fact equivalent to what Mahler had done in~\cite[p.~205]{mahler}.

\subsection{The main result}\label{ssec:mainresult}

We are now in the position to state our main result, Theorem~\ref{thm:main} below, which together with Lemma~\ref{lem:neg->pos} immediately implies Theorem~\ref{theo:main2}. Indeed, the denominator $d_n(C, D, \mathbf a,\mathbf b, k_0)$ in~\eqref{eq:dn} below obviously divides $\delta^{n+1}(\nu n+\nu)!^{2s}$ for all $n\ge N$ for suitable positive integers $\delta$ and $\nu$. We can also take $\delta$ large enough to ensure that $D$ divides~$\delta$. Hence, the denominator $d_n$ of $f_n$ divides $\delta^{n+1}(\nu n+\nu)!^{2s}$ for all $n\ge 0$. 

To prove Corollary \ref{coro:1}, let $v$ be a place of~$\Qbar$ over a rational prime number $p$. From the divisibility in Theorem \ref{theo:main2}, we deduce that 
$$
\vert f_n\vert_v \le p^{v_p(\delta^{n+1})+v_p((\nu n+\nu)!^{2s})} \le p^{v_p(\delta)(n+1)+\frac{2s\nu}{p-1}(n+1)},
$$
because by Legendre's formula \cite[p.~10]{LegeAA}
for the $p$-adic valuation of factorials, {\em i.e.}, 
\begin{equation} \label{eq:Leg} 
v_p(n!)=\sum_{\ell\ge1}\fl{\frac {n} {p^\ell}},
\end{equation}
we have
$$
v_p((\nu n+\nu)!^{2s}) \le 2s\sum_{\ell=1}^\infty\frac {\nu n+\nu} {p^\ell}=\frac{2s\nu(n+1)}{p-1}.
$$
This proves \eqref{eq:vadicbound}.

\medskip

The proof of Theorem~\ref{thm:main} requires a technical auxiliary result which is established separately in Proposition~\ref{prop:fastallg1} below. We emphasize that Theorem~\ref{thm:main} applies to  more general situations than the differential context behind Theorem~\ref{theo:main2}. Indeed, the only assumption we make below on the coefficients $P_{\si, k}(n,j_1,j_2,\dots,j_k)$ is that they are algebraic integers; they need not necessarily be piecewise polynomials in $n, j_1, j_2, \ldots, j_k$ as they are when they come from a solution of an algebraic differential equation.

\begin{theo} \label{thm:main}
Let us consider a recurrence of the form \eqref{eq:Rek-Polya} without
negative~$\si$'s, that is,
\begin{multline} \label{eq:Rek-Polya0} 
f_{n+1}
\\
=\frac {1} {C
  \prod _{i=1} ^{s}(a_in+b_i)}
\sum_{\si=0}^{\si_2}
\sum_{k=1}^{k_0}
\underset{0\le j_1,\dots,j_k\le n}{\sum_{j_1+\dots+j_k=n-\sigma}}
P_{\si, k}(n,j_1,j_2,\dots,j_k)f_{j_1}f_{j_2}\cdots
f_{j_k},\quad \text{for }n\ge N,
\end{multline}
where $P_{\si,k}(n,j_1,j_2,\dots,j_k)$ are algebraic integers, $\si_2$ is a non-negative integer, $C$, $s$ and $k_0$ are positive integers,
$\mathbf b=(b_1,b_2,\dots,b_s)$ is a vector of integers,
$\mathbf a=(a_1,a_2,\dots,a_s)$ is a vector of
positive integers such that $\gcd(a_i,b_i)=1$ for $i=1,2,\dots,s$, and $N$
is some non-negative integer such that
$a_iN+b_i-a_i\ge0$ for $i=1,2,\dots,s$.  The initial values $f_0,f_1,\dots,f_N$ are algebraic numbers with common denominator~$D\ge 1$.

Then, for all $n\ge N$, the denominator of~$f_n$ divides
\begin{equation} \label{eq:dn} 
d_n(C,D,\mathbf a,\mathbf b, k_0)
:=C^{n}D^{(k_0-1)n+1}\Pi^n
\prod _{i=1} ^{s}(a_in)!\,(a_in+b_i-a_i)!,
\end{equation}
where
\begin{equation} \label{eq:Pi} 
\Pi=
\prod _{i=1} ^{s}\Bigg(\big(\max\{b_i-a_i,0\}\big)!^{k_0-1}
\underset{p\text{ \em prime}}
{\prod_{p<2\max_{1\le j\le s}(b_j-a_j)}}
p^{\cl{\log_p\max\{b_i-a_i,1\}}}\Bigg).
\end{equation}
\end{theo}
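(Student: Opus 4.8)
The plan is to prove Theorem~\ref{thm:main} by induction on $n\ge N$, stripping off the ``easy'' multiplicative contributions and reducing everything to a purely combinatorial divisibility among factorials --- which is exactly the statement of Proposition~\ref{prop:fastallg1} below. Set $d_m:=d_m(C,D,\mathbf a,\mathbf b,k_0)$ for $m\ge N$ and $d_m:=D$ for $0\le m<N$. Two elementary observations get the induction started: $D\mid d_N$, since the exponent $(k_0-1)N+1$ of $D$ in $d_N$ is at least $1$; and $d_m\mid d_{m+1}$ for all $m\ge N$, since $C$, $D$, $\Pi$ are fixed positive integers while $(a_im)!\mid(a_i(m+1))!$ and $(a_im+b_i-a_i)!\mid(a_i(m+1)+b_i-a_i)!$, the latter because $0\le a_iN+b_i-a_i\le a_im+b_i-a_i$. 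Hence the denominator of $f_m$ divides $d_m$ for $0\le m\le N$, and it remains to carry out the inductive step.

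So fix $n\ge N$ and assume the denominator of $f_m$ divides $d_m$ for every $m\le n$. Since $\si_1=0$ in~\eqref{eq:Rek-Polya0}, every index occurring on the right-hand side satisfies $j_1+\dots+j_k=n-\si\le n$, so in particular $j_t\le n$ and the denominator of $f_{j_t}$ divides $d_{j_t}$. Because the $P_{\si,k}$ are algebraic integers, the denominator of each summand $P_{\si,k}(n,j_1,\dots,j_k)f_{j_1}\cdots f_{j_k}$ divides $\prod_{t=1}^{k}d_{j_t}$, and since the denominator of a finite sum divides the least common multiple of the summands' denominators, the denominator of $f_{n+1}$ divides $C\prod_{i=1}^{s}(a_in+b_i)$ times that least common multiple. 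Comparing $p$-adic valuations, it therefore suffices to verify, for each individual tuple $(k,j_1,\dots,j_k)$ with $k\le k_0$ and $j_1+\dots+j_k\le n$, that
\begin{equation*}
C\prod_{i=1}^{s}(a_in+b_i)\prod_{t=1}^{k}d_{j_t}\ \Big|\ d_{n+1}.
\end{equation*}
Write $J:=\{t:j_t\ge N\}$ and $F_m:=\prod_{i=1}^{s}(a_im)!\,(a_im+b_i-a_i)!$. Expanding both sides by~\eqref{eq:dn}, the power of $C$ on the left is $1+\sum_{t\in J}j_t\le n+1$, the power of $D$ is $(k_0-1)\sum_{t\in J}j_t+k\le (k_0-1)(n+1)+1$, and the power of $\Pi$ is $\sum_{t\in J}j_t\le n$; so the above reduces to $\prod_{i=1}^{s}(a_in+b_i)\cdot\prod_{t\in J}F_{j_t}\mid\Pi\cdot F_{n+1}$, the extra factor $\Pi$ being available because $\sum_{t\in J}j_t<n+1$. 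Writing $\Pi=\prod_{i=1}^{s}\Pi_i$ with $\Pi_i$ the $i$-th factor in~\eqref{eq:Pi} and letting both sides factor over $i$, it is enough to prove, for each fixed $i$,
\begin{equation*}
(a_in+b_i)\prod_{t\in J}(a_ij_t)!\,(a_ij_t+b_i-a_i)!\ \Big|\ \Pi_i\,(a_i(n+1))!\,(a_i(n+1)+b_i-a_i)!
\end{equation*}
whenever $|J|\le k_0$, $\sum_{t\in J}j_t\le n$, and $a_ij_t+b_i-a_i\ge0$ for $t\in J$. This is the content of Proposition~\ref{prop:fastallg1}.

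To prove this per-index divisibility one distinguishes the sign of $b_i-a_i$. If $b_i\le a_i$, then $\Pi_i=1$ and a direct telescoping works: $\prod_{t\in J}(a_ij_t)!$ divides $(a_i(n+1))!$ because $\sum_{t\in J}a_ij_t\le a_in<a_i(n+1)$, while $\prod_{t\in J}(a_ij_t+b_i-a_i)!$ divides $(a_in+b_i-a_i)!$ because $\sum_{t\in J}(a_ij_t+b_i-a_i)\le a_in+b_i-a_i$, and the leftover factor $a_in+b_i$ is one of the $a_i$ consecutive integers $a_in+b_i-a_i+1,\dots,a_in+b_i$ making up the quotient $(a_i(n+1)+b_i-a_i)!/(a_in+b_i-a_i)!$. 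If $b_i>a_i$ the argument becomes genuinely $p$-adic: for each rational prime $p$ one compares the two sides by means of Legendre's formula~\eqref{eq:Leg} --- equivalently the identity $v_p(m!)=(m-s_p(m))/(p-1)$ together with Kummer's theorem, by which $v_p$ of a multinomial coefficient equals the number of carries in base~$p$. The factor $\big(\max\{b_i-a_i,0\}\big)!^{k_0-1}$ of $\Pi_i$ absorbs the at most $|J|-1\le k_0-1$ surplus copies of $(b_i-a_i)!$ produced when several $j_t$ are small, and the factor $\prod_{p<2\max_j(b_j-a_j)}p^{\cl{\log_p\max\{b_i-a_i,1\}}}$ takes care of the residual small-prime contribution; for $p\ge2\max_j(b_j-a_j)$ the slack furnished by the $a_i$ extra integers $a_in+1,\dots,a_i(n+1)$, together with that already present in $(a_i(n+1)+b_i-a_i)!$, turns out to suffice by itself.

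The main obstacle is precisely the case $b_i>a_i$ with $k_0\ge2$. In the linear-like situations ($k_0=1$, or $b_i\le a_i$) a crude estimate factor by factor is enough, but in general the product $\prod_{t\in J}(a_ij_t)!\,(a_ij_t+b_i-a_i)!$ must be controlled \emph{globally}: splitting it into pieces and bounding them separately loses too much, since the quotients one is then left with are products of consecutive integers that grow with~$n$. This is what makes the $p$-adic bookkeeping delicate --- in particular a single binomial-coefficient valuation $v_p\binom{M}{r}$ is unbounded as $M\to\infty$ for fixed $r$, so it cannot be estimated termwise by a constant but must be played off against the matching growth of $(a_i(n+1))!$ and $(a_i(n+1)+b_i-a_i)!$ on the right-hand side. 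Pinning down exactly which primes force the correction factors in $\Pi$, and with which exponents, is the crux handled in Proposition~\ref{prop:fastallg1}.
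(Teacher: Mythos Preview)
Your proposal is correct and follows essentially the same inductive strategy as the paper: peel off the exponential contributions of $C$, $D$, and~$\Pi$ by simple counting, and reduce everything to the factorial-ratio divisibility that is Proposition~\ref{prop:fastallg1}. The only cosmetic differences are that the paper packages the induction via the renormalised sequence $\varphi_n=d_nf_n$ rather than comparing denominators directly, and that Proposition~\ref{prop:fastallg1} is stated for the product over all~$i$ with the $\tau_i$-indexing rather than your per-$i$ version indexed by $J=\{t:j_t\ge N\}$ --- your variant is an immediate consequence since $J$ is contained in $\{t:a_ij_t+b_i-a_i\ge0\}$ and the proof of the proposition proceeds term by term in~$i$.
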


\begin{proof}
The assertion is certainly true for $n=N$. For, by assumption,
the denominator of $f_N$ is $D$, which trivially
divides $d_N(C,D,\mathbf a,\mathbf b,k_0)$ due to the term
$D^{(k_0-1)N+1}$ in its definition.

For $n>N$ we are going to argue by induction.
Put
\begin{equation} \label{eq:phi-f} 
\ph_n=d_n(C,D,\mathbf a,\mathbf b,k_0)f_n=
C^{n}D^{(k_0-1)n+1}\Pi^n
\left(\prod _{i=1} ^{s}(a_in)!\,(a_in+b_i-a_i)!\right)f_n,
\quad \text{for }n> N.
\end{equation}
and
$$
\ph_n=
C^{n}D^{(k_0-1)n+1}\Pi^n
f_n,
\quad \text{for }0\le n\le N.
$$
By multiplying both sides of the recurrence~\eqref{eq:Rek-Polya0} by
$$
d_{n+1}(C,D,\mathbf a,\mathbf b,k_0)=
C^{n+1}D^{(k_0-1)(n+1)+1}\Pi^{n+1}
\prod _{i=1} ^{s}(a_in+a_i)!\,(a_in+b_i)!,
$$
we obtain
\begin{multline} \label{eq:Rek-Polya3} 
\ph_{n+1}=
\sum_{\si=0}^{\si_2}
\sum_{k=1}^{k_0}
C^\si D^{(k_0-1)\si+k_0-k}\Pi^{\si+1}
\underset{0\le j_1,\dots,j_k\le n}{\sum_{j_1+\dots+j_k=n-\sigma}}
\Bigg(\prod _{i=1} ^{s}
\frac {(a_in+a_i)!\,(a_in+b_i-1)!}
{\underset {j_t> N}{\prod\limits _{t=1} ^{k}}
  (a_ij_t)!\,(a_ij_t+b_i-a_i)!}
\Bigg)\\
\cdot
P_{\si, k}(n,j_1,j_2,\dots,j_k)
\ph_{j_1}\ph_{j_2}\cdots\ph_{j_k},
\quad \text{for }n\ge N,
\end{multline}
where (as indicated) the product over $t$ is taken only over those~$t$
between~1 and~$k$ for which $j_t> N$.

We claim that the recurrence \eqref{eq:Rek-Polya3} is one with
algebraic integer coefficients throughout. Obviously, by induction, this claim
would imply that all $\ph_n$'s --- including those with $n>N$ --- are algebraic 
integers. Going back to the definitions~\eqref{eq:phi-f} of $\ph_n$
and~\eqref{eq:dn} of $d_n(C,D,\mathbf a,\mathbf b,k_0)$, it
follows that the denominator of~$f_n$ divides $d_n(C,D,\mathbf
a,\mathbf b,k_0)$ for all $n\ge0$. 

Clearly, the only problem concerning our claim could
arise from the product over~$i$, a product of factorial ratios.
It is Proposition~\ref{prop:fastallg1} below which addresses this product.
In order to apply the proposition, we need to reorder the  $j_t$'s
in increasing order (so that the $\ta_i$ in the proposition are
well-defined). Then Proposition~\ref{prop:fastallg1} says that
the product of factorial ratios is ``almost" an integer, except
for some prime divisors~$p$ with $p<2\max_{1\le i\le s}(b_i-a_i)$.
However, their product is a divisor of~$\Pi$ (compare its definition
in~\eqref{eq:Pi} with the right-hand side of~\eqref{eq1:vpcoef2-k-s-si-ai}),
which in turn divides $\Pi^{\si+1}$ which appears
as a factor in~\eqref{eq:Rek-Polya3}. This proves our claim and
completes the proof of the theorem.
\end{proof}

We now proceed with the proof of the last missing step in the proof of Theorem~\ref{thm:main}. This is the most complicated part. 
\begin{prop} \label{prop:fastallg1}
Let $n,k,s$ and $a_1,a_2,\dots,a_s$ be positive integers
with $k\ge2$, let\break $b_1,b_2,\dots,b_s$ be integers, and let $\si$,
$\ta_1,\ta_2,\dots,\ta_{k}$, and
$j_1,j_2,\dots,j_{k}$ be
non-negative integers with $j_1+j_2+\dots+j_k=n-\si$ and such that
$a_ij_t+b_i-a_i<0$ for $1\le t\le \ta_i$ and $1\le i\le s$,
while $a_ij_t+b_i-a_i\ge0$ for $\ta_i+1\le t\le k$ and $1\le i\le s$,
In particular, if $b_i-a_i\ge0$, then necessarily $\ta_i=0$.

{\em(1)}
For prime numbers~$p\ge \max_{1\le i\le s}2(b_i-a_i)$,
we have
\begin{equation} \label{eq1:vpcoef1-k-s-si-ai} 
v_p\left(\prod _{i=1} ^{s}
\frac {(a_in+a_i)!\,(a_in+b_i-1)!}
      {(a_ij_{\ta_i+1})!\,(a_ij_{\ta_i+1}+b_i-a_i)!
  \cdots
  (a_ij_k)!\,(a_ij_k+b_i-a_i)!}\right)
    \ge2\sum_{i=1}^sv_p\big((a_i-1)!\,(a_i\si)!\big).
\end{equation}

{\em(2)}
For prime numbers~$p< \max_{1\le i\le s}2(b_i-a_i)$,
we have
\begin{multline} \label{eq1:vpcoef2-k-s-si-ai} 
v_p\left(\prod _{i=1} ^{s}
\frac {(a_in+a_i)!\,(a_in+b_i-1)!}
      {(a_ij_{\ta_i+1})!\,(a_ij_{\ta_i+1}+b_i-a_i)!
  \cdots
  (a_ij_k)!\,(a_ij_k+b_i-a_i)!}\right)\\
\ge
\sum_{i=1}^s\big(-\cl{\log_p\max\{b_i-a_i,1\}}
-(k-1)v_p\big(\!
\max\{b_i-a_i,0\}!\big)
+2v_p\big((a_i-1)!\,(a_i\si)!\big)
\big).
\end{multline}
\end{prop}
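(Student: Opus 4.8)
\emph{Approach.} Since $v_p$ is additive over products, my plan is to prove the corresponding one‑variable lower bound for each index $i$ separately and then add up; so I fix $i$, drop the subscript, and write $d=b-a$. I must then bound from below
$$v_p\!\left(\frac{(an+a)!\,(an+b-1)!}{(aj_{\ta+1})!\,(aj_{\ta+1}+d)!\cdots(aj_k)!\,(aj_k+d)!}\right),$$
where $aj_t+d<0$ for $t\le\ta$, $aj_t+d\ge0$ for $t>\ta$, and $\sum_t j_t=n-\si$. First I would extract the $p$-integral ``bulk'': split the quotient as $R_1\cdot R_2$ with $R_1=(an+a)!/\prod_{t>\ta}(aj_t)!$ and $R_2=(an+b-1)!/\prod_{t>\ta}(aj_t+d)!$, set $m=\sum_{t>\ta}j_t$ and $\si'=n-m\ge\si$, and adjoin to the denominator of $R_1$ the dummy part $a\si'$, so that all parts sum to $an$. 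Then $(an)!/\prod_{t>\ta}(aj_t)!$ equals $(a\si')!$ times a genuine multinomial coefficient, which combined with the divisibility of $(an+1)\cdots(an+a)$ by $a!$ gives $v_p(R_1)\ge v_p\big((a\si')!\,a!\big)\ge v_p\big((a-1)!\,(a\si)!\big)$ \emph{unconditionally}. The same padding works for $R_2$ as soon as $b\le a$ (and trivially if $k=1$): one checks $\sum_{t>\ta}(aj_t+d)\le an+b-1$, adjoins the dummy part $(an+b-1)-\sum_{t>\ta}(aj_t+d)$ (a non‑negative integer that is in fact $\ge a\si+a-1$), and uses divisibility of the corresponding run of consecutive integers by $(a\si+a-1)!$, hence by $(a-1)!\,(a\si)!$. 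Adding the two bounds settles part~(1) whenever $b\le a$ (in which case $p\ge2(b-a)$ is automatic and part~(2) is vacuous) and also whenever $k=1$.

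\emph{The hard case $b>a$, $k\ge2$ (so $\ta=0$).} Here I would peel off one factor at a time: write $(aj_t+d)!=(aj_t)!\prod_{\ell=1}^{d}(aj_t+\ell)$ and $(an+b-1)!=(an+a-1)!\prod_{\ell=1}^{d-1}(an+a+\ell-1)$. Pulling out the $p$-integral bulk as above, but now also using that $(an+b-1)!/\prod_t(aj_t)!$ is divisible by $(a\si+b-1)!$ and hence by $(a\si)!\,(a-1)!\,d!$, reduces the problem to lower‑bounding the $p$-adic valuation of a quotient of the form
$$\frac{\text{product of at most }d\text{ consecutive integers}}{\prod_{t=1}^{k}\prod_{\ell=1}^{d}(aj_t+\ell)}=\frac{\text{product of at most }d\text{ consecutive integers}}{(d!)^{k}\,\prod_{t=1}^{k}\binom{aj_t+d}{d}},$$
where the bulk has already supplied $2v_p\big((a-1)!\,(a\si)!\big)+v_p(d!)$, together with two so‑far‑unused multinomial coefficients. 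At this point I would invoke Legendre's formula~\eqref{eq:Leg} and Kummer's theorem (the $v_p$ of a binomial or multinomial coefficient is the number of base‑$p$ carries) to show that the carries produced by the $k$ blocks $\binom{aj_t+d}{d}$ in the denominator are, up to a discrepancy of at most $\cl{\log_p d}$ (the base‑$p$ length of the shift $d$) and at most $k-1$ further copies of $v_p(d!)$, matched by carries coming from the two multinomial coefficients and from the numerator run $(an+a)\cdots(an+a+d-2)$. When $p\ge2d$ the matching is exact, giving part~(1); in general one is left with the admissible loss $-\cl{\log_p d}-(k-1)v_p(d!)$ of part~(2). Summing the resulting bounds over $i$ yields \eqref{eq1:vpcoef1-k-s-si-ai} and \eqref{eq1:vpcoef2-k-s-si-ai}.

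\emph{Main obstacle.} The crux is exactly this carry‑matching: proving that for $p\ge2(b-a)$ the carries in $\prod_t(aj_t+\ell)$ are completely absorbed, and pinning down the residual loss when $p<2(b-a)$. The factor $2$ in the threshold is what makes this possible — the $d$ ``carry‑triggering'' residues modulo $p$ occupy less than half of $\mathbb Z/p\mathbb Z$ once $p\ge2d$, which lets the offsetting addends $an+a,\dots,an+a+d-1$ avoid them simultaneously — but carrying it out requires a delicate case analysis of the base‑$p$ digit expansions of $an+a$, $an+b-1$, the $aj_t$, and $b-a$, together with repeated use of Legendre's and Kummer's formulas. Everything else is routine bookkeeping.
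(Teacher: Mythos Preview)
Your reduction to a single index~$i$ and the split into $R_1,R_2$ is sound, and your treatment of the case $b_i\le a_i$ via multinomial padding is correct and in fact cleaner than what the paper does there (the paper handles this case by the crude inequalities~\eqref{eq:k-1-tau}--\eqref{eq:Ni+Bi}). There is a harmless off-by-one in your ``numerator run'': $(an+a-1)!\prod_{\ell=1}^{d-1}(an+a+\ell-1)=(an+b-2)!$, not $(an+b-1)!$.

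The genuine gap is in the case $b_i>a_i$. After your rearrangement the whole proposition collapses to the carry-matching inequality you isolate, and that inequality \emph{is} the proposition --- everything before it is bookkeeping. Your heuristic for why $p\ge 2d$ suffices is too vague to be a proof and is partly backwards: you do not want the numerator run to \emph{avoid} the carry-triggering residues, you need the two multinomial coefficients (and possibly the run) to \emph{produce} at least as many carries as the $k$ binomials $\binom{aj_t+d}{d}$ consume, at every level~$\ell$ simultaneously. Simply knowing that each ``bad'' fractional part $\{aj_t/p^\ell\}$ exceeds $1/2$ does not, by itself, force enough carries when you add $a(\si+1),aj_1,\dots,aj_k$ in base~$p$; one has to track how the carries propagate, and your sketch gives no mechanism for that.

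The paper circumvents Kummer entirely. It applies Legendre's formula and works one level~$\ell$ at a time with fractional parts $N_i=\{a_in/p^\ell\}$, $J_{t,i}=\{a_ij_t/p^\ell\}$, $B_i=\{(b_i-a_i)/p^\ell\}$, $S_i=\{a_i\si/p^\ell\}$. The two crucial moves are: (a)~eliminate~$j_k$ via $j_k=n-\si-j_1-\cdots-j_{k-1}$, so the $j_k$-terms become floors of $N_i-J_{1,i}-\cdots-J_{k-1,i}(\pm B_i)-S_i$; and (b)~use that $p\ge 2(b_i-a_i)$ forces $B_i\le\tfrac12$, hence any $t$ with $J_{t,i}+B_i\ge1$ has $J_{t,i}\ge\tfrac12$. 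One then sets $m_i=\#\{t\le k-1:J_{t,i}+B_i\ge1\}$ and argues by cases on $m_i$ and on whether $N_i+B_i\ge1$; in each case the two floors coming from~(a) are bounded above by expressions like $-\lceil m_i/2\rceil$ and $-\lceil(m_i-1)/2\rceil$, which exactly cancel the $m_i$ contributed by the $\lfloor J_{t,i}+B_i\rfloor$'s. For small primes the same analysis goes through once $p^\ell>b_i-a_i$, and the finitely many remaining levels~$\ell$ yield the loss $\lceil\log_p(b_i-a_i)\rceil+(k-1)v_p((b_i-a_i)!)$ in part~(2). If you want to rescue your Kummer approach you will end up reproducing precisely this level-by-level case split; move~(a) is what makes the accounting close.
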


\begin{proof} For this proof, we drew inspiration from our previous work~\cite{kratriv}, in which we also proved another denominator conjecture in the context of zeta values, and not related to Mahler's denominator conjecture studied in the present paper.   

By Legendre's formula recalled above in \eqref{eq:Leg},  
we have
\begin{multline}
v_p\left(\prod _{i=1} ^{s}
\frac {(a_in+a_i)!\,(a_in+b_i-1)!}
      {(a_ij_{\ta_i+1})!\,(a_ij_{\ta_i+1}+b_i-a_i)!
  \cdots
  (a_ij_k)!\,(a_ij_k+b_i-a_i)!}\right)\\
=\sum_{\ell\ge1}\sum_{i=1}^s
\left(\fl{\frac {a_in+b_i-1} {p^\ell}}+\fl{\frac {a_in+a_i}
  {p^\ell}}\right.
\kern7.5cm
\\
\left.
-\fl{\frac {a_ij_{\ta_i+1}} {p^\ell}}-\dots-\fl{\frac {a_ij_k} {p^\ell}}
-\fl{\frac {a_ij_{\ta_i+1}+b_i-a_i} {p^\ell}}-\dots-\fl{\frac {a_ij_k+b_i-a_i} {p^\ell}}
\right).
\label{eq1:n,j,a_i-k-si}
\end{multline}
We put $N_i=\left\{\frac {a_in} {p^\ell}\right\}$,
$J_{t,i}=\left\{\frac {a_ij_t} {p^\ell}\right\}$, for $t
=1,2,\dots,k-1$ (sic!),
$A_i=\left\{\frac {a_i-1} {p^\ell}\right\}$ and
$B_i=\left\{\frac {b_i-a_i} {p^\ell}\right\}$ for $i=1,2,\dots,s$,
and $S_i=\left\{\frac {a_i\si} {p^\ell}\right\}$.
The reader should keep in mind that $N_i,J_{t,i},A_i,B_i,S_i$ also depend
on~$\ell$. We do not indicate this in the notation for better
readability. 

Using the above notation, the summand of the sum over~$\ell$
on the right-hand side
of~\eqref{eq1:n,j,a_i-k-si} can be rewritten as
{\allowdisplaybreaks
\begin{multline}
\sum_{i=1}^s\bigg(\fl{N_i+A_i+B_i}+\fl{N_i+A_i+\tfrac {1} {p^\ell}}
-\sum_{t=\ta_i+1}^{k-1}\big(\fl{J_{t,i}}+\fl{J_{t,i}+B_i}\big)\\
-\fl{N_i-J_{1,i}-\dots-J_{k-1,i}-S_i}-\fl{N_i-J_{1,i}-\dots-J_{k-1,i}+B_i-S_i}\\
+2\sum_{t=1}^{\ta_i}\fl{\tfrac {a_ij_t} {p^\ell}}+2\fl{\tfrac {a_i-1} {p^\ell}}
-(k-1-\ta_i)\fl{\tfrac {b_i-a_i} {p^\ell}}+2\fl{\tfrac {a_i\si} {p^\ell}}\bigg)\\
=\sum_{i=1}^s\bigg(\fl{N_i+A_i+B_i}+\fl{N_i+A_i+\tfrac {1} {p^\ell}}
-\sum_{t=\ta_i+1}^{k-1}\fl{J_{t,i}+B_i}
\kern4cm\\
-\fl{N_i-J_{1,i}-\dots-J_{k-1,i}-S_i}-\fl{N_i-J_{1,i}-\dots-J_{k-1,i}+B_i-S_i}\\
+2\sum_{t=1}^{\ta_i}\fl{\tfrac {a_ij_t} {p^\ell}}+2\fl{\tfrac {a_i-1} {p^\ell}}
-(k-1-\ta_i)\fl{\tfrac {b_i-a_i} {p^\ell}}+2\fl{\tfrac {a_i\si} {p^\ell}}\bigg).
\label{eq1:N,J,A_i-k-si}
\end{multline}}%

\medskip
(1)
Concerning the summand, we claim that
\begin{multline} \label{eq1:Teil1}
\fl{N_i+A_i+B_i}+\fl{N_i+A_i+\tfrac {1} {p^\ell}}
-\sum_{t=\ta_i+1}^{k-1}\fl{J_{t,i}+B_i}
\kern4cm\\
-\fl{N_i-J_{1,i}-\dots-J_{k-1,i}-S_i}-\fl{N_i-J_{1,i}-\dots-J_{k-1,i}+B_i-S_i}
\\
+2\sum_{t=1}^{\ta_i}\fl{\tfrac {a_ij_t} {p^\ell}}+2\fl{\tfrac {a_i-1} {p^\ell}}
-(k-1-\ta_i)\fl{\tfrac {b_i-a_i} {p^\ell}}+2\fl{\tfrac {a_i\si} {p^\ell}}\\
\ge
2\fl{\tfrac {a_i-1} {p^\ell}}
+2\fl{\tfrac {a_i\si} {p^\ell}}.
\end{multline}

For a fixed $i$, we consider first the case where $b_i<a_i$.
Then we have
\begin{equation} \label{eq:k-1-tau} 
\sum_{t=\ta_i+1}^{k-1}\fl{J_{t,i}+B_i}\le k-1-\ta_i
\le -(k-1-\ta_i)\fl{\tfrac {b_i-a_i} {p^\ell}}
\end{equation}
and
\begin{equation} \label{eq:Ni+Bi} 
\fl{N_i+A_i+B_i}\ge
\fl{N_i-J_{1,i}-\dots-J_{k-1,i}+B_i-S_i},
\end{equation}
which establishes \eqref{eq1:Teil1} in this case.

Now let $b_i\ge a_i$. We note that then necessarily $\ta_i=0$.
Without loss of generality  we assume that, for fixed~$i$
with $1\le i\le s$,
we have $J_{t,i}+B_i\ge1$
for $1\le t\le m_i$ and $J_{t,i}+B_i<1$ for $m_i+1\le t\le k-1$,
for some non-negative integer~$m_i$.

Since by assumption $p\ge2(b_i-a_i)$,
we have $\fl{\frac {b_i-a_i} {p^\ell}}=0$ and
$$
B_i=\left\{\frac {b_i-a_i} {p^\ell}\right\}
=\frac {b_i-a_i} {p^\ell}\le \frac {b_i-a_i} {2(b_i-a_i)}=\frac
{1} {2}. 
$$

It should be observed that the same conclusions also hold for
$b_i-a_i=0$. 

The case of $m_i=0$ of our claim follows from~\eqref{eq:Ni+Bi}.

Let now $m_i\ge1$.
If $N_i+B_i<1$, then 
$$
N_i+B_i-J_{1,i}-\dots-J_{k-1,i}-S_i<1-\tfrac {m_i} {2}.
$$
This implies
\begin{equation*} 
\fl{N_i+B_i-J_{1,i}-\dots-J_{k-1,i}-S_i}\le-\cl{\tfrac {m_i-1} {2}}.
\end{equation*}
Since in the current case $N_i+B_i<1\le J_{1,i}+B_i$,
we obtain that $N_i<J_{1,i}$. Hence
$$
N_i-J_{1,i}-\dots-J_{k-1,i}-S_i<
-J_{2,i}-\dots-J_{k-1,i}-S_i\le-\tfrac {m_i-1} {2},
$$
and therefore
\begin{equation*} 
  \fl{N_i-J_{1,i}-\dots-J_{k-1,i}-S_i}\le-\cl{\tfrac {m_i
    } {2}}.
\end{equation*}
If one uses these findings in \eqref{eq1:Teil1}, the left-hand side
can be bounded from below by
$$
-m_i+\cl{\tfrac {m_i-1} {2}}+\cl{\tfrac {m_i} {2}}
+2\fl{\tfrac {a_i-1} {p^\ell}}
+2\fl{\tfrac {a_i\si} {p^\ell}}
=2\fl{\tfrac {a_i-1} {p^\ell}}
+2\fl{\tfrac {a_i\si} {p^\ell}},
$$
as we claimed.

Now let $N_i+B_i\ge1$. Then
\begin{equation*} 
N_i+B_i-J_{1,i}-\dots-J_{k-1,i}-S_i<\tfrac {3} {2}-\tfrac {m_i} {2},
\end{equation*}
which implies
\begin{equation*} 
\fl{N_i+B_i-J_{1,i}-\dots-J_{k-1,i}-S_i}\le1-\cl{\tfrac {m_i} {2}}.
\end{equation*}
On the other hand, we have
$$
N_i-J_{1,i}-\dots-J_{k-1,i}-S_i<1-\tfrac {m_i} {2},
$$
and hence
\begin{equation*} 
\fl{N_i-J_{1,i}-\dots-J_{k-1,i}-S_i}\le-\cl{\tfrac {m_i-1} {2}}.
\end{equation*}
If one uses these findings in \eqref{eq1:Teil1}, the left-hand side
can be bounded from below by
$$
1-m_i-1+\cl{\tfrac {m_i} {2}}+\cl{\tfrac {m_i-1} {2}}
+2\fl{\tfrac {a_i-1} {p^\ell}}
+2\fl{\tfrac {a_i\si} {p^\ell}}
=2\fl{\tfrac {a_i-1} {p^\ell}}
+2\fl{\tfrac {a_i\si} {p^\ell}},
$$
again as we claimed.

\medskip
(2) Let $p< \max_{1\le i\le s}2(b_i-a_i)$.
As in Case~(1), if $b_i-a_i<0$ for some~$i$, then, since
$\ta_i\le k-1$, we have again
\eqref{eq:k-1-tau} and~\eqref{eq:Ni+Bi}, and then the
subsequent arguments show that~\eqref{eq1:Teil1} holds.

If $b_i-a_i>0$, consider an $\ell$ with $\ell\ge\log_p(b_i-a_i)+1$.
We then have $\fl{\frac {b_i-a_i} {p^\ell}}=0$
and
$$
B_i=\left\{\frac {b_i-a_i} {p^\ell}\right\}
=\frac {b_i-a_i} {p^\ell}\le \frac {b_i-a_i} {(b_i-a_i)p}=\frac {1} {p}\le\frac
{1} {2}. 
$$
Again, it should be observed that the same conclusions also hold for
$b_i-a_i=0$, even regardless of the value of~$\ell$. 
The arguments of Case (1) then show that \eqref{eq1:Teil1} holds again.

On the other hand, if $\ell<\log_p(b_i-a_i)+1$, then there is no
restriction on~$B_i$ (except the trivial strict upper bound of~1).
We claim that the relevant summand in~\eqref{eq1:N,J,A_i-k-si}
(in particular, with $\ta_i=0$) satisfies the estimation
\begin{multline} \label{eq1:Teil2}
\fl{N_i+A_i+B_i}+\fl{N_i+A_i+\tfrac {1} {p^\ell}}
-\sum_{t=1}^{k-1}\fl{J_{t,i}+B_i}
\kern4cm\\
-\fl{N_i-J_{1,i}-\dots-J_{k-1,i}-S_i}-\fl{N_i-J_{1,i}-\dots-J_{k-1,i}+B_i-S_i}
\\
+2\fl{\tfrac {a_i-1} {p^\ell}}
-(k-1)\fl{\tfrac {b_i-a_i} {p^\ell}}+2\fl{\tfrac {a_i\si} {p^\ell}}\\
\ge
2\fl{\tfrac {a_i-1} {p^\ell}}
-(k-1)\fl{\tfrac {b_i-a_i} {p^\ell}}
+2\fl{\tfrac {a_i\si} {p^\ell}}-1.
\end{multline}

Using the same notation as before, if $m_i=0$ then one sees
that~\eqref{eq1:Teil2} holds.

If $N_i+B_i<1$, then we have
$$
N_i+B_i-J_{1,i}-\dots-J_{k-1,i}-S_i<1-\tfrac {m_i} {2}.
$$
This implies
\begin{equation*} 
\fl{N_i+B_i-J_{1,i}-\dots-J_{k-1,i}-S_i}\le-\cl{\tfrac {m_i-1} {2}}.
\end{equation*}
Since in the current case $N_i+B_i<1\le J_{1,i}+B_i$,
we obtain that $N_i<J_{1,i}$. Hence
$$
N_i-J_{1,i}-\dots-J_{k-1,i}-S_i<
-J_{2,i}-\dots-J_{k-1,i}-S_i\le-\tfrac {m_i-1} {2},
$$
and therefore
\begin{equation*} 
  \fl{N_i-J_{1,i}-\dots-J_{k-1,i}-S_i}\le-\cl{\tfrac {m_i
    } {2}}.
\end{equation*}
If one uses these findings in \eqref{eq1:Teil2}, the left-hand side
can be bounded from below by
\begin{multline*}
-m_i+\cl{\tfrac {m_i-1} {2}}+\cl{\tfrac {m_i} {2}}
+2\fl{\tfrac {a_i-1} {p^\ell}}
-(k-1)\fl{\tfrac {b_i-a_i} {p^\ell}}
+2\fl{\tfrac {a_i\si} {p^\ell}}\\
=2\fl{\tfrac {a_i-1} {p^\ell}}
-(k-1)\fl{\tfrac {b_i-a_i} {p^\ell}}
+2\fl{\tfrac {a_i\si} {p^\ell}},
\end{multline*}
which is even slightly better than \eqref{eq1:Teil2}.

If $N_i+B_i\ge1$, then
\begin{equation*} 
N_i+B_i-J_{1,i}-\dots-J_{k-1,i}-S_i<{2}-\tfrac {m_i} {2},
\end{equation*}
which implies
\begin{equation*} 
\fl{N_i+B_i-J_{1,i}-\dots-J_{k-1,i}-S_i}\le1-\cl{\tfrac {m_i-1} {2}}.
\end{equation*}
Furthermore, we have
$$
N_i-J_{1,i}-\dots-J_{k-1,i}-S_i<1-\tfrac {m_i} {2},
$$
and hence
\begin{equation*} 
\fl{N_i-J_{1,i}-\dots-J_{k-1,i}-S_i}\le-\cl{\tfrac {m_i-1} {2}}.
\end{equation*}
If one uses these findings in \eqref{eq1:Teil2}, the left-hand side
can be bounded from below by
\begin{multline*}
1-m_i-1+2\cl{\tfrac {m_i-1} {2}}
+2\fl{\tfrac {a_i-1} {p^\ell}}
-(k-1)\fl{\tfrac {b_i-a_i} {p^\ell}}
+2\fl{\tfrac {a_i\si} {p^\ell}}\\
\ge-1+2\fl{\tfrac {a_i-1} {p^\ell}}
-(k-1)\fl{\tfrac {b_i-a_i} {p^\ell}}
+2\fl{\tfrac {a_i\si} {p^\ell}},
\end{multline*}
as we claimed.

If we use all this in \eqref{eq1:n,j,a_i-k-si} for fixed~$i$
(with $b_i-a_i>0$), we obtain
\begin{align*}
  v_p&
  \left(
  \frac {(a_in+a_i)!\,(a_in+b_i-1)!}
        {(a_ij_{\ta_i+1})!\,(a_ij_{\ta_i+1}+b_i-a_i)!
  \cdots
  (a_ij_k)!\,(a_ij_k+b_i-a_i)!}\right)\\
&\ge \sum_{\ell=1}^{\cl{\log_p(b_i-a_i)}}
\left(-1+2\fl{\tfrac {a_i-1} {p^\ell}}
-(k-1)\fl{\tfrac {b_i-a_i}
  {p^\ell}}+2\fl{\tfrac {a_i\si} {p^\ell}}\right)\\
&\kern4cm
+\sum_{\ell=\cl{\log_p(b_i-a_i)}+1}^\infty
\sum_{i=1}^s
\left(2\fl{\tfrac {a_i-1} {p^\ell}}+2\fl{\tfrac {a_i\si} {p^\ell}}\right)\\
&\ge
-\cl{\log_p(b_i-a_i)}
-(k-1)v_p(\max\{b_i-a_i,0\}!)+2v_p\big((a_i-1)!\,
(a_i\si)!\big).
\end{align*}
Upon summation of both sides over $1\le i\le s$,
this yields the claimed result in Case~(2).

\medskip
This completes the proof of the proposition, and thus of Theorem~\ref{thm:main}.
\end{proof}

\section{Examples} \label{sec:examples}

In this section, we present various examples of classical DA functions to which our results apply; many more examples of algebraic differential equations can be found in~\cite{kamke}, for instance ``Abel's equation'' on page 199 that we do not treat here. Due to the general nature of Theorems~\ref{theo:main2} and~\ref{thm:main}, the divisibility properties they imply in such or such situation are not always sharp. Better divisibility properties have often been obtained by different means, in particular when the Taylor coefficients are also known to satisfy combinatorial properties from which alternative formulas equivalent to 
those of type~\eqref{eq:Rek-Polya}  can be deduced.

\subsection{The linear case} \label{ssec:linearcase}

The material in this subsection is well-known. We recall it for the reader's convenience and also because it provides a simple interpretation of the condition that $M$ is split over $\mathbb Q$ in terms of an indicial polynomial of a linear differential equation. In the general non-linear case, the interpretation of~$M$ along a similar line is given in~\S\ref{ssec:lemma1}. The situation we consider here is when $Q$ in~\eqref{eq:diffalgeq} is a linear polynomial in $Y_0, \ldots, Y_k$, {\em i.e.}, an inhomogeneous linear equation for $f$. Differentiating enough time, we then obtain a homogeneous differential equation for $f$.

A linear differential equation $L:\sum_{k=0}^n p_k(x)f^{(k)}=0$, $p_0, \ldots, p_n\in \mathbb K[X]$, $p_n\neq 0$ and $\mathbb K$ any subfield of $\mathbb C$, can be rewritten as $\sum_{j=0}^m x^{m-j}q_j(\theta)(f)=0$, where $q_0, \ldots, q_m\in \mathbb K[X]$, $q_m\neq 0$, and $\theta$ is Euler's operator $x\frac{d}{dx}$. The polynomial $q_m(X)$ is the indicial polynomial at the origin of $L$; see \cite[\S4.1, Lemma~1]{firi}, the proof of which holds in fact {\em verbatim} in this general setting. Let us assume that $f(x)=\sum_{n=0}^\infty f_n x^n\in \mathbb K[[x]]$ is a solution of $L$. It is immediate that (see \cite[\S4.1]{batstrat})
$$
\sum_{j=0}^m q_j(n+j)f_{n+j}=0
$$
for all $n$ large enough, say $n\ge N$. Up to increasing the value of $N$, we can assume that for all $n\ge N$ we have $q_m(n+m)\neq 0$. Then
$$
f_{n+m}=-\frac{1}{q_m(n+m)}\sum_{j=0}^{m-1} q_j(n+j)f_{n+j}, \quad n\ge N.
$$

Let us now assume that $\mathbb K=\Qbar$. We may rewrite the above equation as 
$$
f_{n+1}:=\frac{1}{a_m(n)}\sum_{j=0}^{m-1} a_j(n)f_{n-j}, \quad n\ge N+m-1,
$$
where $a_0, \ldots, a_m\in \mathcal{O}_{\Qbar}[X]$ and the leading coefficient of 
$a_m$ is a positive integer. This recurrence is of the type in~\eqref{eq:Rek-Polya} with $k_0=1$. By induction on $n$, we obtain that  
$$
D_n:=D\prod_{j=N+m-1}^{n-1} a_m(j)
$$ 
is a denominator of $f_n$ for all $n\ge N+m$, where $D$ is a common denominator of $f_N, \ldots,\break f_{N+m-1}$.

Now, $a_m$ is split over $\mathbb Q$ if and only if the indicial polynomial at the origin $q_m$ is split over~$\mathbb Q$. Assuming this is the case, we have $a_m(X)=C\prod_{i=1}^d(u_iX+v_i)$ with $C$ a non-negative integer, $u_1, \ldots, u_d$ positive integers and $v_1, \ldots, v_d$
integers. It follows that $D_n$ divides $D C^{n+1}(\nu n+\nu)!^d$ for some positive integer $\nu$. 

\subsection{Roots of power series}\label{ssec:roots}

Let $f(x)=\sum_{n=0}^\infty f_n x^n\in \Qbar[[x]]$ be a solution of a linear equation with coefficients in~$\Qbar[x]$ and with rational exponents at the origin. We have seen in \S\ref{ssec:linearcase} that the sequence $(f_n)_{n\ge0}$ is then solution of a recurrence of type~\eqref{eq:Rek-Polya}, which in this case reads
\begin{equation}\label{eq:recurrenceroot1}
\sum_{k=0}^d p_k(n)f_{n+k}=0,
\end{equation}
with $p_k(X)\in \mathcal{O}_{\Qbar}[X]$ for $k=0, \ldots, d-1$, and $p_d(X)\in \mathbb Q[X]$ and split over $\mathbb Q$.

For simplicity, let us assume without loss of generality that $f_0\neq 0$. Let $m\ge 2$ be an integer and let $g(x)=\sum_{n=0}^\infty g_n x^n\in \Qbar[X]$ be such that $f=g^m$. In particular, $g_0\neq 0$ because $f_0=g_0^m$.  Substituting $g^m$ for $f$ in the differential equation satisfied by $f$ readily implies that $g$ is a DA function. We claim that the sequence $(g_n)_{n\ge 0}$ is also solution of a recurrence of type~\eqref{eq:Rek-Polya}.

Indeed, we have 
$$
f_n=\sum_{j_1+j_2+\cdots+j_m=n} g_{j_1}g_{j_2}\cdots g_{j_m} =
mg_0^{m-1}g_n +\sum_{\underset{j_1, \ldots, j_m\le n-1}{j_1+j_2+\cdots+j_m=n}} g_{j_1}g_{j_2}\cdots g_{j_m},
$$
so that from \eqref{eq:recurrenceroot1} we deduce that 
\begin{multline} \label{eq:recurrenceroot2}
mg_0^{m-1}p_d(n)g_{n+d}
\\
=-\sum_{k=0}^{d-1} \Big(p_k(n)\sum_{j_1+j_2+\cdots+j_m=n+k} g_{j_1}g_{j_2}\cdots g_{j_m}\Big)-p_d(n)\sum_{\underset{j_1, \ldots, j_m\le n+d-1}{j_1+j_2+\cdots+j_m=n+d}} g_{j_1}g_{j_2}\cdots g_{j_m}.
\end{multline}
Since $p_d(n)\neq 0$ for all $n$ large enough, it follows that \eqref{eq:recurrenceroot2} can be rearranged into a recurrence of type~\eqref{eq:Rek-Polya} as claimed.

\subsection{Generating functions of special sequences} 
We now illustrate our method with three classical series solutions of Riccati equations.

\medskip
The function $x/\log(1+x)=\sum_{n=0}^\infty g_nx^n$ is solution of $x(1+x)y'+y^2-(1+x)y=0$. This differential equation provides the recurrence relation 
\begin{equation} \label{eq:Gregory}
g_{n+1}=-\frac{1}{n+2}\Big((n-1)g_{n}+\sum_{j=1}^{n}g_j g_{n+1-j}\Big), \quad n\ge 0,
\end{equation}
with $g_0=1$. The $g_n$ are known as the {\it Gregory coefficients} or {\it Cauchy numbers of the first kind}. It is known that $n!\,g_n=\sum_{k=0}^n s(n,k)/(k+1)$, where $s(n,k)\in \mathbb Z$ are Stirling numbers of the first kind, so that  $\lcm\{1,2,\ldots, n+1\}n!\,g_n\in \mathbb Z$ for all $n\ge 0$. See \cite[pp.~293--294]{Comtet}.

Let us explain what divisibility could be obtained with our result. The recurrence~\eqref{eq:Gregory} is an example for which $\sigma_1=-1$, and thus this necessitates to transform it into a recurrence with $\sigma\ge 0$. Following the procedure in the proof of Lemma~\ref{lem:neg->pos}, 
let $\widetilde g_n=g_{n+1}$, substitute this in~\eqref{eq:Gregory}, and finally replace
$n$ by $n+1$. In this manner, we obtain
$$
\widetilde g_{n+1}=-\frac{1}{n+3}\Big(n\widetilde g_{n}+\sum_{j=0}^{n}\widetilde g_j \widetilde g_{n-j}\Big), \quad n\ge 0, \; \widetilde g_0=1/2.
$$
We can apply Theorem~\ref{thm:main} with $s=1$, $a_1=1$, $b_1=3$, $C=1$, $D=2$, $\sigma_1=\sigma_2=0$, $k_0=2$, $N=0$ and $\Pi=12$, so that $24^{n} (n-1)!\,(n+1)!\,{g}_{n} \in \mathbb Z$ for all $n\ge 1$.

\medskip

Next we consider the function $\frac{x}{e^x-1}=\sum_{n=0}^\infty b_n x^n$. We have $b_0=0, b_1=-1/2$, $b_{2n+1}=0$ for $n\ge 2$. Let us write $b_{2n}=B_{2n}/(2n)!$ for $n\ge 1$. The numbers $B_{2n}$ are the {\it Bernoulli numbers}. The Clausen--von Staudt Theorem says that the denominator of $B_{2n}$ is the product of the prime numbers $p$ such that $p-1$ divides $2n$; this quantity divides $\lcm\{1,2,\ldots, 2n+1\}$.

The above power series is solution of $xy'+y^2+(x-1)y=0$, from which we deduce that 
\begin{equation} \label{eq:bernoulli}
b_{n+1}=-\frac{1}{n+2}\Big(b_{n}+\sum_{j=1}^{n}b_j b_{n+1-j}\Big), \quad n\ge 0, \;b_0=1.
\end{equation}
This is again an example of a recurrence with $\sigma_1=-1$, and it is completely similar to~\eqref{eq:Gregory}. We let $\widetilde{b}_n:=b_{n+1}$, substitute in \eqref{eq:bernoulli}, and then change $n$ to $n+1$. We obtain
$$
\widetilde{b}_{n+1}=-\frac{1}{n+3}
\Big(\widetilde{b}_{n}+\sum_{j=0}^{n}\widetilde{b}_j \widetilde{b}_{n-j}\Big), \quad n\ge 0, \; \widetilde{b}_0=1/2.
$$
Exactly as with the Gregory coefficients, we can apply Theorem~\ref{thm:main} with $s=1$, $a_1=1$, $b_1=3$, $C=1$, $D=2$, $\sigma_1=\sigma_2=0$, $k_0=2$, $N=0$ and $\Pi=12$, so that $24^{n} (n-1)!\,(n+1)!\,{b}_{n} \in \mathbb Z$ for all $n\ge 1$.

\medskip

The function $\tan(x)=\sum_{n=0}^\infty t_n x^n$ is solution of $y'-y^2-1=0$, which translates into the recurrence
$$
t_{n+1}=\frac{1}{n+1}\sum_{j=0}^n t_jt_{n-j}, \; n\ge 1,
$$
with $t_0=0, t_1=1$. We can apply Theorem~\ref{thm:main} with $C=1$, $D=1$, $s=1$, $a_1=1$, $b_1=1$, $\sigma_1=\sigma_2=0$, $k_0=2$, $P_{0,1}=0$, $P_{0,2}=1$,  $N=1$ and $\Pi=1$. It follows that  $n!^2t_n \in \mathbb Z, n\ge 0$. It is known that $t_{2n}=0$ while $t_{2n-1}=(-1)^{n-1} 4^n(4^n-1)B_{2n}/(2n)!$, where the $B_{2n}$ are again the Bernoulli numbers. See \cite[p.~88]{Comtet}.

\medskip

These examples show why Theorem \ref{thm:main} is not always sharp: quantities like $\lcm\{1,2,\ldots, n\}$ or $\prod_{p \,:\, p-1 \vert n} p$ are  ``incorporated'' into an $n!$ factor, because even though they are less than~$3^n$, they
do not divide $\delta^n$ for some $\delta$. At the cost of a complicated $p$-adic analysis that we do not reproduce here, we have been able to prove that for the sequence $(f_n)_{n\ge 0}$ defined in the introduction by \eqref{eq:fna} with $a=1$, the denominator of $f_n$ divides $(n+1)!^2/\lcm\{1, 2, \ldots, n+1\}$, which improves on $n!\,(n+1)!$ given by Theorem~\ref{thm:main}. We did not try to prove such refinements in the general case.

\subsection{Elliptic differential equation} \label{ssec:elliptic}
The {\it Weierstra{\ss} elliptic function} $\wp$ with modular invariants $g_2,g_3\in \mathbb C$ (such that $g_2^3\neq 27g_3^2$) satisfies the algebraic differential equation
\begin{equation}\label{eq:equadiffelliptic}
\wp'^2=4\wp^3-g_2\wp-g_3.
\end{equation}
The function $\wp(x)$ is a doubly-periodic meromorphic function on $\mathbb C$, and it can be expanded in a Laurent series at $0$~(\footnote{Our results hold for DA Laurent series $f$, because $x^mf$ is a DA power series for some suitable integer~$m$. A non-linear recurrence of type~\eqref{eq:Rek-Polya} for the coefficients of~$x^mf$ obviously provides one for those of~$f$.}). We have 
$$
\wp(x)=\frac{1}{x^2}+\sum_{n=2}^\infty p_n x^{2n-2},
$$
where the coefficients $p_n$ satisfy the recurrence
\begin{equation}\label{eq:coeffelliptic}
p_n=\frac{3}{(2n+1)(n-3)} \sum_{j=2}^{n-2}p_jp_{n-j}, \quad n\ge 4,
\end{equation}
with $p_2=g_2/20, p_3=g_3/28$; see \cite[p.~635, Eq.~(18.5.3)]{abrastegun}. Eq.~\eqref{eq:coeffelliptic} is easily obtained not from~\eqref{eq:equadiffelliptic} but from the simpler equation
$$
12\wp^2-2\wp''-g_2=0,
$$
which follows at once from differentiating \eqref{eq:equadiffelliptic}.  
To apply Theorem~\ref{thm:main}, we define $u_n:=p_{n+2}$ for $n\ge 0$: we have
$$
u_{n+1}=\frac{3}{(2n+7)n}\sum_{j=0}^{n-1}u_j u_{n-j-1}, \quad n\ge 1,
$$
with $u_0=g_2/20$ and $u_1=g_3/28$. It is obvious that $u_n$ is a polynomial in $g_2$ and $g_3$ with rational coefficients. Assuming that $g_2, g_3$ are in~$\Qbar$, let $D$ denote a common denominator of $g_2/20$ and $g_3/28$. We are in the situation of Theorem~\ref{thm:main} with $C=1$, $s=2$, $a_1=1$, $b_1=0$, $a_2=2$, $b_2=7$, $k_0=2$, $\sigma_2=1$, $N=1$, $P_{0,1}=P_{0,2}=P_{1,0}=0$ and $P_{1,1}=3$ and  $\Pi:=2^3\cdot 3^2\cdot 5\cdot 7=2520$. It follows that 
$$
D^{n+1}\Pi^n (n-1)!\,n!\,(2n)!\,(2n+5)!\, p_{n+2} \in \mathcal{O}_{\Qbar}, \quad n\ge 1.
$$

We have not been able to find a divisibility of this nature in this generality in the literature, but would be surprised if it were a new result. For instance, Hurwitz obtained a much better result when $g_2=4$ and $g_3=0$ (the lemniscate case). Using his notations, let 
$$
\wp(x)=\frac{1}{x^2}+\sum_{n=1}^\infty \frac{2^{4n}E_n}{4n}\cdot\frac{x^{4n-2}}{(4n-2)!} \in \mathbb Q[[x]],
$$
which  is a solution of $y'^2=4y^3-4y$. 
Then he showed in \cite[p.~208]{hurwitz} the following result, which is an analogue of the Clausen--von Staudt Theorem for the Bernoulli numbers.

\medskip\noindent
{\em Let $d_n$ denote the denominator of $E_n$: {\em(1)} there is no prime number $\equiv 3 \mod 4$ that divides~$d_n$; {\em(2)} if a prime number $p\equiv 1 \mod 4$  divides $d_n$, then $p-1$ divides $4n$ and $p^2$ does not divide $d_n$.}

\medskip\noindent
Moreover, it follows from the partial fraction expansion of $E_n$ in \cite[p.~219, Eq.~(78)]{hurwitz} that $v_2(E_n)=-1$. Hence,
$(\prod_{p \,:\, p-1\vert 4n}p) E_n\in \mathbb Z$. 
Moreover,  letting
$$\frac1{2\wp(x)}=\sum_{n=0}^{\infty} e_{n} \frac {x^{4n+2}} {(4n+2)!},
$$
he also proved that  $e_{n}\in \mathbb Z$ for all $n\ge 0$; see \cite[p.~224]{hurwitz}. Finally, let us mention that $-1/\wp(x)$ is curiously also a solution of $y'^2=4y^3-4y$, and that $\sqrt{1/\wp(x)}$ is a solution of $y''+2y^3=0$.

\subsection{Painlev\'e equations}

Painlev\'e equations produce a lot of interesting examples.
(We refer the reader to~\cite{ClarAA} for an introduction.)
In this subsection, we only consider  the family of algebraic differential equations known as Painlev\'e \textrm{PII'}:
$$
y''=\delta(2y^3-2xy)+\gamma(6y^2+x)+\beta y + \alpha,
$$
where it is assumed that $\alpha, \beta, \gamma, \delta$ are in~$\Qbar$. It is a simple task to check that $u(x):=\sum_{n=0}^\infty u_n x^n$ is a solution of \textrm{PII'}, where the sequence $(u_n)_{n\ge 0}$ is given by 
\begin{equation*}
u_{n+1}=\frac{1}{n(n+1)}\Big(
2\delta\sum_{\underset{0\le j_1, j_2, j_3\le n-1}{j_1+j_2+j_3=n-1}}u_{j_1}u_{j_2}u_{j_3}+6\gamma\sum_{\underset{0\le j_1, j_2\le n-1}{j_1+j_2=n-1}}u_{j_1}u_{j_2}
-2\beta u_{n-1} -2\delta u_{n-2}\Big), \; n\ge 2,
\end{equation*}
with $u_0$, $u_1$ arbitrary in $\Qbar$, and $u_2=-\delta u_0^3-3\gamma u_0^2-\beta u_0/2-\alpha/2$. This is a situation where Theorem~\ref{thm:main} with $s=2$, $a_1=a_2=b_1=1$, $b_2=0$, $k_0=3$, $\sigma_1=1$, $\sigma_2=2$, $N=2$, $\Pi=1$, and $C$ the common denominator of $2\delta, 6\gamma, 2\beta$, applies. 
Letting $D\ge 1$ denote the least common denominator of $u_0, u_1, u_2$, we have 
$$
C^n D^{2n+1}(n-1)!\,n!^3\, u_n \in \mathbb Z, \quad n\ge 2.
$$

\subsection{Kepler's equation} \label{ssec:kepler}

{\it Kepler's equation} from Celestial Mechanics is 
$
E-e\sin(E)=M,
$
where $E, M, e$ are the eccentric anomaly, the mean anomaly, and the eccentricity, respectively. Inventing for the purpose of solving this equation what is now known as {\it Lagrange's inversion formula}, Lagrange~\cite{lagrange} found the well-known expansion
\begin{equation}\label{eq:lagrange}
\varphi(e):=E-M=\sum_{n=1}^\infty 
\Big(\frac{d}{dt}\Big)^{n-1}
\big(\sin(t)^n\big)_{t=M} \cdot \frac{e^n}{n!},
\end{equation}
which is an odd and $2\pi$-periodic function of $M$, and which converges in the disk $\vert e\vert<e_0$ for all $M$. Here, $e_0=2\sqrt{\rho(1-\rho)}/(1-2\rho)\approx 0.6627$, where $\rho\approx 0.0832$ satisfies ${1/\rho-1}=\exp(2/(1-2\rho))$; see the details in~\cite{puiseux}. 

It is easy to prove  that 
$
(\sin^n(t))^{(n-1)} =P_n\big(\sin(t),\cos(t)\big),
$
where $P_n(X,Y)\in \mathbb Z[X,Y]$ is of degree $\le n$ in $X$ and $\le 1$ in $Y$. Therefore, when $\sin(M)\in \Qbar$ (hence $\cos(M) \in \Qbar$ as well), there exists a positive integer $\delta$ such that, for all $n\ge 1$, 
$$
\delta^{n+1}\Big(\frac{d}{dt}\Big)^{n-1}\big(\sin^n(t)\big)_{t=M} \in \mathcal{O}_{\Qbar}.
$$
Hence, from \eqref{eq:lagrange}, we deduce in this case that the denominator of the $n$-th Taylor coefficient of $\varphi$ at the origin divides $\delta^{n+1}n!$.

\medskip

 We now want to use this example to show how to effectively determine a recurrence like~\eqref{eq:Rek-Polya} in a more complex situation than in the previous examples. We change $e$ to $x$ and let $\varphi(x)=E-M$. Kepler's equation reads $\sin(\varphi+M)=\varphi/x$, with $\varphi(0)=0$ and $\varphi'(0)=\sin(M)$. Differentiating this equation with respect to $x$ and squaring both sides, we find 
\begin{equation}\label{eq:varphidiff}
\varphi'^2(1-(\varphi/x)^2)=((\varphi/x)')^2,
\end{equation}
or, in expanded form,
\begin{equation}\label{eq:keplerdiff}
x^2(x^2-1)\varphi'^2-\varphi^2+2x\varphi'\varphi-x^2\varphi'^2\varphi^2=0.
\end{equation}
Let $\varphi(x)=\sum_{n=0}^\infty \phi_n x^n$. We have $\phi_0=0$, $\phi_1=\sin(M)$, and $\phi_2=\sin(M)\cos(M)$. We shall assume below that $\phi_2\neq 0$, which {\em a fortiori} implies that $\phi_1\neq 0$. From the algebraic differential equation~\eqref{eq:keplerdiff}, we deduce that, for $n\ge 5$, we have
\begin{multline*}
\sum_{\underset{0\le j_1, j_2\le n-4}{j_1+j_2=n-4}} (j_1+1)(j_2+1)\phi_{j_1+1}\phi_{j_2+1}-\sum_{\underset{0\le j_1, j_2\le n-2}{j_1+j_2=n-2}} (j_1+1)(j_2+1)\phi_{j_1+1}\phi_{j_2+1}
-\sum_{\underset{0\le j_1, j_2\le n}{j_1+j_2=n}} \phi_{j_1}\phi_{j_2}
\\
+\sum_{\underset{0\le j_1, j_2\le n-1}{j_1+j_2=n-1}} 2(j_1+1)\phi_{j_1+1}\phi_{j_2}-\sum_{\underset{0\le j_1, j_2, j_3, j_4\le n-2}{j_1+j_2+j_3+j_4=n-2}} (j_1+1)(j_2+1)\phi_{j_1+1}\phi_{j_2+1}\phi_{j_3}\phi_{j_4}=0,
\end{multline*}
which can be rewritten as
\begin{multline*}
\sum_{\underset{0\le j_1, j_2\le n-3}{j_1+j_2=n-2}} j_1j_2\phi_{j_1}\phi_{j_2}-\sum_{\underset{0\le j_1, j_2\le n-1}{j_1+j_2=n}} j_1j_2\phi_{j_1}\phi_{j_2}
-\sum_{\underset{0\le j_1, j_2\le n}{j_1+j_2=n}} \phi_{j_1}\phi_{j_2}
\\
+\sum_{\underset{0\le j_1\le n, 0\le j_2\le n-1}{j_1+j_2=n}} 2j_1\phi_{j_1}\phi_{j_2}-\sum_{\underset{0\le j_1, j_2\le n-1, 0\le j_3, j_4\le n-2}{j_1+j_2+j_3+j_4=n}} j_1j_2\phi_{j_1}\phi_{j_2}\phi_{j_3}\phi_{j_4}=0.
\end{multline*}
This is equivalent to 
\begin{multline*}
\sum_{\underset{0\le j_1, j_2\le n}{j_1+j_2=n-2}}P_1(n,j_1,j_2)\phi_{j_1}\phi_{j_2}+\sum_{\underset{0\le j_1, j_2\le n}{j_1+j_2=n}} P_2(n,j_1,j_2)\phi_{j_1}\phi_{j_2}
\\
+\sum_{\underset{0\le j_1, j_2,j_3,j_4\le n}{j_1+j_2+j_3+j_4=n}} P_3(n,j_1,j_2,j_3,j_4)\phi_{j_1}\phi_{j_2}\phi_{j_3}\phi_{j_4}=0,
\end{multline*}
where 
$$
P_1(n,j_1,j_2)=j_1j_2{\bf 1}_{\{0\le j_1,j_2\le n-3\}},
$$ 
$$P_2(n,j_1,j_2)=-j_1j_2{\bf 1}_{\{0\le j_1,j_2\le n-1\}}
+2j_1{\bf 1}_{\{0\le j_2\le n-1\}}-1,
$$
$$P_3(n,j_1,j_2,j_3,j_4)=-j_1j_2{\bf 1}_{\{0\le j_1,j_2\le n-1, 0\le j_3, j_4\le n-2\}}.
$$
This is an algebraic relation between $\phi_0, \phi_1, \ldots, \phi_n$. A simple analysis shows that neither $\phi_{n}$ nor $\phi_{n-1}$ appear in this relation. Indeed, in the three sums, $\phi_n$ can appear only in terms involving $\phi_n\phi_0$ or $\phi_n\phi_0^3$, which are equal to 0 because $\phi_0=0$. The coefficient $\phi_{n-1}$ does not appear in the first sum, appears in the second as $(P_2(n,n-1,1)+P_2(n,1,n-1))\phi_1\phi_{n-1}=0$ because $P_2(n,n-1,1)+P_2(n,1,n-1)=0$, and in the third sum in terms of the form $\phi_{n-1}\phi_1\phi_0\phi_0=0$. The coefficient $\phi_{n-2}$ does not appear in the first sum, appears in terms involving $\phi_{n-2}\phi_1^2\phi_0=0$ and $\phi_{n-2}\phi_2\phi_0^2=0$ in the third sum, and in the second sum, $\phi_{n-2}$ appears in the expression $(P_2(n,n-2,2)+P_2(n,2,n-2))\phi_2\phi_{n-2}=-2(n+1)\phi_2\phi_{n-2}$. Therefore, since we have assumed that $\phi_2\neq 0$, we finally obtain the recurrence (after changing $n$ to $n+3$)
\begin{multline*}
\phi_{n+1}=\frac{1}{2\phi_2(n+4)} \bigg(  
\sum_{\underset{0\le j_1, j_2\le n}{j_1+j_2=n+1}}P_1(n+3,j_1,j_2)\phi_{j_1}\phi_{j_2}
+
\sum_{\underset{0\le j_1, j_2\le n}{j_1+j_2=n+3}}P_2(n+3,j_1,j_2)\phi_{j_1}\phi_{j_2}
\\+\sum_{\underset{0\le j_1, j_2,j_3,j_4\le n}{j_1+j_2+j_3+j_4=n+3}} P_3(n+3,j_1,j_2,j_3,j_4)\phi_{j_1}\phi_{j_2}\phi_{j_3}\phi_{j_4}\bigg), \quad n\ge 2.
\end{multline*}
This is a recurrence of the form~\eqref{eq:Rek-Polya} (with negative $\sigma_1$), to which we can apply Theorem~\ref{theo:main2} because $M(X)\in \mathbb Q[X]$ is linear. We do not provide the details of the explicit result, which is of the form $\widetilde{\delta}^{n+1}(\nu n+\nu)!^2$ for some integers $\widetilde{\delta}, \nu\ge 1$, because it is weaker than the already obtained divisibility of $\delta^{n+1}n!$ by the denominator of $\phi_n$.

Finally, instead of $\varphi$, we could consider $\psi(x):=\varphi(x)/x=\sum_{n=0}^\infty \phi_{n+1}x^n$. The differential equation~\eqref{eq:varphidiff} becomes
$
(x\psi)'^2(1-\psi^2)=(\psi')^2
$
or, in expanded form,
\begin{equation*}
(1-x)\psi'^2+2x\psi'\psi^3+x^2\psi'^2\psi^2-2x\psi'\psi+\psi^4-\psi^2=0.
\end{equation*}
This would lead to an {\em a priori} different recurrence for the sequence $(\phi_{n})_{n\ge 0}$.

\subsection{The compositional inverse of the dilogarithm}

The dilogarithm is $\textup{Li}_2(x):=\sum_{n=1}^\infty x^n/n^2$, with derivative $-\log(1-x)/x$, and it admits a power series $\ell$ as inverse for the composition:  
$$
{\ell}(x):=\sum_{n=0}^\infty \ell_n x^n=x-\frac14x^2+\frac{1}{72}x^3-\frac{1}{576}x^4-\frac{31}{86400}x^5- \frac{149}{1036800}x^6-\cdots.
$$ 
The function $\ell$ is DA, for it satisfies
\begin{equation}\label{eq:equadiffdilog}
\ell''\ell-\ell''\ell^2+\ell'^3+\ell'^2\ell-\ell'^2=0.
\end{equation}
Indeed, we have $1=(\textup{Li}_2(\ell))'=-\ell'\log(1-\ell)/\ell$, so that, by multiplying by $\ell$ and then differentiating both sides, we obtain
$$
\ell'=-\ell''\log(1-\ell)+\frac{\ell'^2}{1-\ell}=\frac{\ell''\ell}{\ell'}+\frac{\ell'^2}{1-\ell}, 
$$
which gives the above differential equation after some rearrangement. The situation is a little simpler than with Kepler's equation because~\eqref{eq:equadiffdilog} is autonomous, {\em i.e.}, its coefficients are independent of $x$. For all $n\ge 0$, we have  
\begin{align}
[x^n](\ell''\ell) &=\sum_{i+j=n} (i+1)(i+2)\ell_{i+2}\ell_j, \label{eq:coeff1}
\\
[x^n](-\ell''\ell^2) &=-\sum_{i+j+k=n} (i+1)(i+2)\ell_{i+2}\ell_jf_k, \notag
\\
[x^n](\ell'^3) &=\sum_{i+j+k=n} (i+1)(j+1)(k+1)\ell_{i+1}\ell_{j+1}\ell_{k+1}, \label{eq:coeff3}
\\
[x^n](\ell'^2\ell) &=\sum_{i+j+k=n} (i+1)(j+1)\ell_{i+1}\ell_{j+1}\ell_k ,
\notag 
\\
[x^n](-\ell'^2) &=-\sum_{i+j=n} (i+1)(j+1)\ell_{i+1}\ell_{j+1},
\label{eq:coeff5}
\end{align}
where in the summations all the indices run between 0 and $n$, and where as usual $[x^n](f)$ denotes the $n$-th Taylor coefficient of a power series $f$. Proceeding as in \S\ref{ssec:kepler}, we see that $\ell_{n+2}$ appears in none of these sums because it is always multiplied by $\ell_0=0$. The term $\ell_{n+1}$ appears only in~\eqref{eq:coeff1}, \eqref{eq:coeff3}, and in~\eqref{eq:coeff5}, with the coefficients $n(n+1)\ell_1$, $3(n+1)\ell_1^2$, and $-2(n+1)\ell_1$, respectively. This is $\ell_1(n+1)(n+3\ell_1-2)=(n+1)^2$ in total. Therefore,  we have 
\begin{multline*}
\ell_{n+1}=\frac{1}{(n+1)^2}\Big( 
-\sum_{\underset{i\le n-2, j\le n}{i+j=n}} (i+1)(i+2)\ell_{i+2}\ell_j +
\sum_{\underset{i\le n-2, j,k\le n}{i+j+k=n}} (i+1)(i+2)\ell_{i+2}\ell_j\ell_k
\\
-\sum_{\underset{i,j,k\le n-1}{i+j+k=n}} (i+1)(j+1)(k+1)\ell_{i+1}\ell_{j+1}\ell_{k+1}-\sum_{\underset{i,j\le n-1, k\le n}{i+j+k=n}} (i+1)(j+1)\ell_{i+1}\ell_{j+1}\ell_k 
\\
+\sum_{\underset{i,j\le n-1}{i+j=n}} (i+1)(j+1)\ell_{i+1}\ell_{j+1}
\Big), \quad n\ge 0.
\end{multline*}
It follows that, for all $n\ge 0$, $\delta^{n+1} (\nu n+\nu)!^4\ell_n\in \mathbb Z$ for some positive integers $\delta, \nu$. Numerical experiments suggest that the exponent $4$ could be replaced by $2$. This would be true if it could be proved for instance that $\big(\ell(n)\big)_{n\ge 0}$ satisfies a non-linear recurrence of type \eqref{eq:Rek-Polya} like above but with $(n+1)^2$ replaced by $an+b$ for suitable integers $a$ and $b$.

As a side remark, we point out that it is possible to determine the asymptotic behaviour of $\ell_n$. By Lagrange's inversion formula \cite[p.~732,~Theorem~A.2]{FlSeAA}, we have
$$
\ell_n=\frac1{2i\pi n}\int_{\mathcal{C}} \frac{dz}{\textup{Li}_2(z)^n}, \quad n\ge 0,
$$
where $\mathcal{C}$ is a circle centred at 0 and of sufficiently small radius. This circle can be deformed to a suitable Hankel-type contour to  which the saddle point method can be applied (see~\cite{FlSeAA} for similar computations). We obtain that 
\begin{equation}\label{eq:asympellnzeta2}
\ell_n\sim-\frac{\zeta(2)^{-n}}{n^2\log(n)^2}, \quad n\to +\infty.
\end{equation}
Hence, the radius of convergence of $\ell$ at $x=0$ is $\pi^2/6$, and $(\ell_n/\ell_{n+1})_{n\ge 0}$ is a sequence of rational numbers that converge to $\pi^2/6$, but not especially quickly. The proof of \eqref{eq:asympellnzeta2} is given in our note \cite{kratrivpolylog}.

\subsection{Examples related to modular forms}

Various automorphic functions, including certain classical modular forms, are known to satisfy algebraic differential equations of order at least 3 and of the form $Q(y)(y')^2=\{y;x\}$, where  $\{y;x\}:=y'''/y'-\frac{3}{2}(y''/y')^2$ is the Schwarzian derivative with respect to $x$, and $Q$ is a rational function with poles of order at most $2$. For instance, Mahler proved in~\cite{mahler2} that the {\it elliptic modular invariant} 
\begin{multline*}
F(q):=J(q^2) = q^{-2}\Big(1+240\sum_{n=1}^\infty n^3q^{2n}(1-q^{2n})^{-1}\Big)^3\prod_{n=1}^\infty (1-q^{2n})^{-24}
\\
=1/q^2+744+196884q^2+21493760q^4+864299970q^6+\cdots,
\end{multline*}
satisfies the order 3 equation
\begin{equation}\label{eq:modularinvariant}
F'''=\frac{3q^2 F''^2-4q F'F''-F'^2}{2q^2F'}
-F'^3\Big(\frac{4}{9F^2}+\frac{3}{8(F-12^3)^2}-\frac{23}{72F(F-12^3)}\Big),
\end{equation}
where the derivatives are taken with respect to $q$. He also proved that $F$ cannot satisfy an algebraic differential equation of order $\le 2$. Since the degree in $F'''$ of \eqref{eq:modularinvariant} is 1, this equation is of the standard form to which we can apply the procedure described in \cite[p.~382]{SiSpAA}. The indicial polynomial $P_0$ of the associated differential operator $L_0\in \Qbar[[x]][\frac{d}{dx}]$ is $P_0(X)=X^3 + 8X^2 - 10X + 64$, which has no rational root. Lemma 2.2 of \cite[p.~382]{SiSpAA} then provides a non-linear recurrence of type \eqref{eq:Rek-Polyabis} for the coefficients of $F$ with $M(X)=P_0(X+m)$ for some integer $m\ge 0$.
This recurrence is not of type  \eqref{eq:Rek-Polya}, and  our Theorems~\ref{theo:main2} and~\ref{thm:main} cannot be applied to it. We do not know if there exists a recurrence of type  \eqref{eq:Rek-Polya} for the coefficients of~$F$ (which are well-known to be integers), {\em i.e.},  with $M$ split over~$\mathbb Q$. Let us also mention that a vast algebraico-geometric generalization of modular forms, known as mirror maps, are shown to be DA functions in~\cite[Proposition~1]{zudilin}.

\subsection{A simple example with $M$ non split over $\mathbb Q$} 
We conclude with an example for which $M$ is non-split over $\mathbb Q$, and for which it is possible to make numerical experiments. The differential equation $x^2y''+(x-1)y'+y-xy^2=1$ has as solution the power series 
$\sum_{n=0}^{\infty} f_n x^n$ with 
$$
f_{n+1}=\frac{1}{n^2+1}\sum_{k=0}^n f_k f_{n-k}, \quad n\ge 0, \; f_0:=1. 
$$
Numerical experiments for values of $n$ up to 2000 suggest that the denominator $d_n$ of $f_n$ is such that $\log(d_n)/(n\log(n)^2)$ converges to a constant close to $0.566$. Moreover, 
$$
\frac{\log(d_{2n})}{2n\log(2n)}-\frac{\log(d_{n})}{n\log(n)}
$$ seems to converge to a constant close to 
$0.39$, which tends to confirm that $\log(d_{n})/(n\log(n))$ behaves more like $\log(n)$ than like $\log\log(n)$. 

Moreover, it seems that $\prod_{k=0}^{\lfloor n/2\rfloor} (k^2+1)$ divides $2^n d_n$ for all $n\ge 0$. If true, this conjecturally implies that there exist no integers $\delta, \nu,\mu,s\ge 0$ such that $d_n$ divides $\delta^{n+1}(\nu n+\mu)!^s$ for all $n\ge 0$. Indeed, on the contrary, this would imply the same type of divisibility for $\prod_{k=0}^{n} (k^2+1)$, and we have explained in the introduction why this is not true conjecturally. 
 
All these computations do not point towards a positive solution to Mahler's denominator conjecture, but these experiments should be conducted for more values of $n$ on this sequence and others in the non-split case to be more affirmative.

\section*{Acknowledgement}
We are grateful to the Institut Fourier for having supported
a research visit of the first author at the institute in
November 2024, during which this project, started
at the occasion	of another research visit at the institute in 2018, could be eventually finalised.


\begin{thebibliography}{99}

\bibitem{abrastegun} M. Abramowitz and I. A. Stegun, {\em Handbook of Mathematical Functions with Formulas, Graphs, and Mathematical Tables}, 9th printing, New York, Dover, 1972.

\bibitem{batstrat} V. V. Batyrev and D. van Straten, Generalized hypergeometric functions and rational curves on Calabi--Yau complete intersections in toric varieties, {\em Comm. Math. Phys.} {\bf 168} (1995), 493-–533.

\bibitem{ClarAA} P. Clarkson, Painlev\'e equations --- nonlinear special functions,
in: {\em Orthogonal Polynomials and Special Functions.
Computation and Applications},
Lecture notes from the 5th European Summer School held at the Universidad
Carlos~III de Madrid, Legan\'es,
July 8--18, 2004, F.~Marcell\'an and W.~Van Assche (eds.),
Lecture Notes in Math., vol.~1883,
Springer-Verlag, Berlin, 2006. 

\bibitem{Comtet} L. Comtet, {\em Advanced Combinatorics}, D.~Reidel
  Publishing Company, Dordrecht, 1974.
  
\bibitem{firi} S. Fischler and T. Rivoal, Linear independence of values of $G$-functions, {\em J. Eur. Math. Soc.} {\bf 22}.5 (2020), 1531--1576.

\bibitem{FlSeAA} P. Flajolet and R. Sedgewick, 
{\em Analytic Combinatorics}, Cambridge University Press, Cambridge, 2009.

\bibitem{hurwitz} A. Hurwitz, \"Uber die Entwicklungskoeffizienten der lemniskatischen Funktionen, {\em Math. Annal.} {\bf 51} (1898), 196-–226.

\bibitem{kamke} E. Kamke, {\em Differentialgleichungen: L\"osungsmethoden und L\"osungen, I, Gew\"ohnliche Differentialgleichungen}, B. G. Teubner, Leipzig, 1977. 

\bibitem{kratriv} C. Krattenthaler and T. Rivoal,  {Hyperg\'eom\'etrie et fonction z\^eta de Riemann}, {\em Mem. Amer. Math. Soc.} {\bf 186} (2007), 93~pages.

\bibitem{kratrivpolylog} C. Krattenthaler and T. Rivoal, On the compositional inverse of polylogarithms, unpublished note, 2025, 6 pages.

\bibitem{lagrange} J.-L. Lagrange, Sur le probl\`eme de Kepler, {\em M\'emoires de l'Acad\'emie Royale des Sciences de Berlin} {\bf 25} (1771), 204--233.

\bibitem{LegeAA} A.-M. Legendre, {\em Essai sur la th\'eorie des nombres}, 2ed., 
Courcier, Paris, 1808.

\bibitem{mahler2} K. Mahler, On algebraic differential equations satisfied by automorphic functions, {\em J. Austral. Math. Soc.} {\bf 10} (1969), 445–-450.

\bibitem{mahlervadic} K. Mahler, A $p$-adic analogue of a theorem by J. Popken,
{\em J. Austral. Math. Soc.} {\bf 16} (1973), 176--184.

\bibitem{mahler} K. Mahler, {\em Lectures on Transcendental Numbers},
Lecture Notes in Math., vol.~546, Springer-Verlag, Berlin/New York, 1976. 

\bibitem{maillet} E. Maillet, Sur les s\'eries divergentes et les \'equations diff\'erentielles, {\em Ann. Sci. ENS} (3) {\bf 20} (1903), 487--518.

\bibitem{malgrange} B. Malgrange, Sur le th\'eor\`eme de Maillet, 
{\em Asymptotic Anal.} {\bf 2}.1 (1989), 1--4. 

\bibitem{perron} O. Perron, \"Uber lineare Differentialgleichungen mit rationalen Koeffizienten, {\em Acta Math.} {\bf 34} (1911), 139--163.  

\bibitem{polya} G. P\'olya, \"Uber das Anwachsen von ganzen Funktionen, die einer Differentialgleichung gen\"ugen, {\em Vierteljahrschr. Naturf. Ges. Z\"urich} {\bf 61} (1916), 531--545. 

\bibitem{popken} J. Popken, {\em \"Uber arithmetische Eigenschaften analytischer Funktionen},
Dissertation: Groningen. Noord-Hollandsche Uitgeversmaatschappij, Amsterdam, 1935.

\bibitem{puiseux} V. Puiseux, Sur la convergence des s\'eries qui se pr\'esentent dans la th\'eorie du mouvement elliptique des plan\`etes, {\em J. Math. Pures Appl.} {\bf 14} (1849), 33--39.

\bibitem{sah} A. Sah,
An improved bound on the least common multiple of polynomial sequences, {\em J. Th\'eor. Nombres Bordeaux} {\bf 32} (2020), 891--899. 

\bibitem{SiSpAA}
Y. Sibuya and S. Sperber,
 Some new results on power-series solutions of algebraic differential equations,
in: {\em Singular perturbations and asymptotics},
Proc. Adv. Sem., Math. Res. Center, Univ. Wisconsin, Madison, Wis., vol.~45,
R.~E.~Meyer and S.~V.~Parter (eds.),
Academic Press, New York--London, 1980, pp.~379--404.


\bibitem{SiSpAB}
Y. Sibuya and S. Sperber,
Arithmetic properties of power series solutions of algebraic differential equations,
{\em Ann. Math. (2)} {\bf113} (1981), 111--157.

\bibitem{zudilin} W. Zudilin, Transcendence problems of the mirror [Number theory casting a look at the mirror], preprint 2001, available at {\tt https://arxiv.org/abs/math/0008237}

\end{thebibliography}
\end{document}